\documentclass[a4paper,oneside,12pt,reqno]{amsart}
\usepackage{hyperref}
\usepackage[headinclude,DIV13]{typearea}
\areaset{15.1cm}{25.0cm}
\parskip 0pt plus .5pt
\usepackage{amsfonts,amssymb,amsmath,amsthm,bbm}
\usepackage[utf8]{inputenc}
\usepackage{graphicx, psfrag}

\usepackage{amsfonts,amssymb,mathrsfs}
\usepackage{stmaryrd}

\overfullrule 0mm

\newtheorem{theorem}{Theorem}[section]
\newtheorem{lemma}[theorem]{Lemma}
\newtheorem{proposition}[theorem]{Proposition}
\newtheorem{corollary}[theorem]{Corollary}

\theoremstyle{definition}

\theoremstyle{remark}
\newtheorem*{remark}{Remark}

\def\paragraph#1{\noindent \textbf{#1}}

\providecommand{\abs}[1]{\lvert#1\rvert}
\providecommand{\norm}[1]{\lVert#1\rVert}
\providecommand{\Bigabs}[1]{\Big\lvert#1\Big\rvert}

\numberwithin{equation}{section}

\def\Id{\mathrm{Id}}

\def\d{\mathrm{d}}

\def\<{\langle}
\def\>{\rangle}
\def\a{\alpha}

\def\e{\epsilon}

\def\g{\gamma}

\def\l{\lambda}
\def\r{\rho}

\def\t{\tau}

\def\o{\omega}
\def\D{\Delta}
\def\L{\Lambda}
\def\G{\Gamma}

\def\S{\Sigma}

\def\del{\partial}
\def\R{{\Bbb R}}  
\def\P{{\Bbb P}}  
\def\Z{{\Bbb Z}}  
\def\C{{\Bbb C}}  
\def\E{{\Bbb E}}  

\let\cal=\mathcal

\def\FF{{\cal F}}

\def\HH{{\cal H}}

\def\OO{{\cal O}}

\def \G {{\Gamma}}
\def \L {{\Lambda}}

\def \e {{\varepsilon}}

\def \m {{\mu}}
\def \n {{\nu}}
\def \D {{\Delta}}
\def \t {{\tau}}

\def \g {{\gamma}}
\def \l {{\lambda}}
\def \d {{\delta}}
\def \a {{\alpha}}
\def \o {{\omega}}

\def \n {{\nu}}

\def \del {{\partial}}

\def \r {{\rho}}

\def \ba {\begin{array}}
\def \ea {\end{array}}


\def \D {{\mathbb D}}

\newcommand{\be}{\begin{equation}}
\newcommand{\ee}{\end{equation}}

\newcommand{\bea}{\begin{eqnarray}}
\newcommand{\eea}{\end{eqnarray}}
\def\TH(#1){\label{#1}}\def\thv(#1){\ref{#1}}
\def\Eq(#1){\label{#1}}\def\eqv(#1){(\ref{#1})}

\def\capa{\hbox{\rm cap}}

\def\wt {\widetilde}
\def\wh{\widehat}

%




\numberwithin{equation}{section}

\bibliographystyle{alpha}

\begin{document}

\title{Uniform estimates for metastable transition times in a
coupled bistable system}
\author{Florent Barret}
\address{CMAP UMR 7641, \'Ecole Polytechnique CNRS, Route de Saclay,
91128 Palaiseau Cedex France; {\rm email:
barret@cmap.polytechnique.fr}}
\author{ Anton Bovier}
\address{Institut f\"ur Angewandte Mathematik,
Rheinische Friedrich-Wilhelms-Universit\"at, Endenicher Allee 60,
53115 Bonn, Germany; {\rm email: bovier@uni-bonn.de}}
\author{Sylvie M\'el\'eard}
\address{CMAP UMR 7641, \'Ecole Polytechnique CNRS, Route de Saclay,
91128 Palaiseau Cedex France; {\rm email:
sylvie.meleard@polytechnique.edu}}

\maketitle

\begin{abstract}
We consider a  coupled bistable $N$-particle system  on $\R^N$
driven by a Brownian noise, with a strong coupling corresponding
to the synchronised
regime. Our aim  is to obtain sharp estimates on the
metastable transition times between the two stable states, both for
fixed $N$ and in the limit when $N$ tends to infinity, with error estimates uniform in $N$. These
estimates are a main  step towards a rigorous understanding of
the metastable behavior of infinite dimensional systems, such as the
stochastically perturbed Ginzburg-Landau equation.
Our results are based on the potential theoretic approach to metastability.
\end{abstract}

\bigskip
\emph{MSC 2000 subject classification:} 82C44, 60K35.
\bigskip

\emph{Key-words:} Metastability, coupled bistable systems, stochastic 
Ginzburg-Landau equation,
metastable transition time, capacity estimates.

\section{Introduction}\label{section.intro}

The aim of this paper is to analyze the behavior
of metastable transition times for a gradient diffusion
model, independently of the dimension. Our method is based on potential theory and requires the existence of a reversible invariant probability measure. This measure exists for Brownian driven diffusions with gradient drift. 

To be specific, we consider  here a model of a chain of coupled
particles in a  double well potential driven by Brownian
noise (see e.g.  \cite{berglund107}). I.e., we consider the system of
stochastic differential equations 
\be\label{sde.1}
\mathrm{d}X_\epsilon(t)=-\nabla F_{\g,N}(X_\epsilon(t))\mathrm{d}t+
\sqrt{2\epsilon}\mathrm{d}B(t),
\ee 
where $X_\e(t)\in \R^N$ and
\be\label{potential.1} F_{\g,N}(x)=
\sum_{i\in\Lambda}\left(\frac14x_i^4-\frac12x_i^2\right)
+\frac{\gamma}{4}\sum_{i\in\Lambda}(x_i-x_{i+1})^2, 
\ee 
with
$\L=\Z/N\Z$ and  $\g>0$ is a parameter. $B$ is a $N$ dimensional
Brownian motion and $\e>0$ is the intensity of the noise. Each
component (particle) of this system is subject to force derived from a 
bistable potential. The components of the  system are coupled to their nearest neighbor with
intensity $\g$ and perturbed by independent noises of constant variance
$\e$. While the system without noise, i.e. $\e=0$, has
several stable fixpoints, for $\e>0$ transitions between these
fixpoints will occur at  suitable timescales. Such a situation is
called metastability.

For fixed $N$ and small $\e$, this problem has been widely studied
in the literature and we refer to the books by  Freidlin and
Wentzell \cite{freidlinwentzell} and Olivieri and Vares
\cite{OlivieriVares} for further discussions. In recent years, the
potential theoretic approach, initiated by Bovier, Eckhoff, Gayrard,
and Klein \cite{bovier04} (see \cite{bovier09} for a review), has
allowed to give very precise results on such transition times and
notably led to a proof of  the so-called Eyring-Kramers formula
which provides sharp asymptotics for these transition times, for any
fixed dimension. However, the results  obtained in
\cite{bovier04} do not include control of the error terms that are
uniform in the dimension of the system.

Our aim in this paper is to obtain such uniform estimates. These
estimates constitute  a the main  step towards a rigorous understanding of
the metastable behavior of infinite dimensional systems, i.e.
stochastic partial differential equations (SPDE) such as the
stochastically perturbed Ginzburg-Landau equation. Indeed, the
deterministic part  of the system \eqref{sde.1} can be seen as the
discretization of the drift part of this SPDE, as  has been
noticed e.g. in \cite{berglund207}. For a heuristic discussion of the
metastable behavior of this SPDE, see e.g. \cite{maierstein} and
\cite{west}. Rigorous results on the level of the  large deviation asymptotics
were obtained e.g. by Faris and Jona-Lasinio \cite{fajona},
Martinelli et al. \cite{martin}, and Brassesco \cite{stella}.

In the present paper we consider only the simplest situation, the
so-called synchronization
regime, where  the coupling $\g$ between the particles
is so strong  that there are only three relevant critical points of the
potential $F_{\g,N}$ \eqref{potential.1}. A generalization to  more complex
situations is however possible and
will be treated elsewhere.

The remainder of this paper is organized as follows. In Section 2 we recall
briefly the main results from the potential theoretic approach, we recall the
key properties of the potential $F_{\g,N}$, and we state the results on
metastability that follow from  the results of \cite{bovier04} for
fixed $N$. In Section 3 we deal with the case when $N$ tends to
infinity and state our main result, Theorem \ref{main}. In Section
4 we prove the main theorem through sharp estimates on the relevant
capacities.

\bigskip

In the remainder of the paper we adopt the following notations:
\begin{itemize}
\item for $t\in\R$, $\lfloor t \rfloor$ denotes the unique integer
$k$ such that $k\leq t<k+1$; 
\item $\t_D\equiv\inf \{t>0:X_t\in
D\}$ is the hitting time of the set $D$ for the process $(X_t)$;
\item $B_r(x)$ is the ball of radius $r>0$ and center $x\in\R^N$;
\item for
$p\geq 1$, and $(x_k)^N_{k=1}$ a sequence, we denote the
$L^P$-norm of $x$ by 
\be
\|x\|_p=\left(\sum^N_{k=1}|x_k|^p\right)^{1/p}. 
\ee
\end{itemize}

\bigskip

\noindent\textbf{Acknowledgments.} This paper is based on
the master thesis of F.B.\cite{barret07}
that was  written in part during a research visit
of F.B. at the International Research Training Group
``Stochastic models of Complex Systems'' at the Berlin University of Technology
under the supervision of A.B. F.B. thanks the IRTG SMCP
and TU Berlin for the kind hospitality and the ENS Cachan
for financial support. A.B.'s research is
supported in part by the German Research Council through
the SFB 611 and the Hausdorff Center for Mathematics.

\section{Preliminaries}\label{section.sharp}

\subsection{Key formulas from the potential theory approach}

We recall briefly the basic formulas from   potential theory
that we will need here.  The diffusion $X_{\epsilon}$ is the one introduced in \eqref{sde.1} and its infinitesimal generator is denoted by $L$. Note that $L$ is the closure of the operator
\be\Eq(generator)
L= \e e^{F_{\g,N}/\e} \nabla e^{-F_{\g,N}/\e}\nabla.
\ee
 For  $A,D$ regular
open
subsets of $\R^N$, let $h_{A,D}(x)$
be the harmonic  function (with respect to the generator $L$) with boundary conditions $1$ in $A$ and $0$ in $D$.
Then, for $x\in (A\cup D)^c$, one has
$h_{A,D}(x)=\P_x[\t_A<\t_D]$.  The equilibrium measure, $\,e_{A,D}$, is 
then defined (see e.g.
\cite{chungwalsh}) as the
unique measure on $\partial A$ such that \be \Eq(aa.1) h_{A,D}(x)=
\int_{\partial A}e^{-F_{\g,N}(y)/\e} G_{D^c}(x,y) e_{A,D}(dy), \ee
where $G_{D^c}$ is the Green function associated with the
generator $L$ on the domain $D^c$. This yields readily the following formula  
for the
hitting time of $D$ (see. e.g.  \cite{bovier04}):
\be\label{key.1} 
\int_{\partial
A}\E_{z}[\t_D]e^{-F_{\g,N}(z)/\e}e_{A,D}(dz) =
\int_{D^c}h_{A,D}(y)e^{-F_{\g,N}(y)/\e}dy. 
\ee
The capacity, $\capa(A,D)$, is defined as
\be\Eq(aa.2)
\capa(A,D)=\int_{\partial A} e^{-F_{\g,N}(z)/\e}e_{A,D}(dz).
\ee
Therefore,
\be\label{key.2}
\n_{A,D}(dz)=\frac{e^{-F_{\g,N}(z)/\e}e_{A,D}(dz)}{\capa(A,D)}
\ee
is a probability measure on $\del A$, that we may call the
equilibrium probability. The equation \eqref{key.1} then reads
\be\label{key.3}
\int_{\partial A}\E_z[\t_D]\n_{A,D}(dz) =
\E_{\n_{A,D}}[\t_D] =
\frac{\int_{D^c}h_{A,D}(y)e^{-F_{\g,N}(y)/\e}dy}{\capa(A,D)}.
\ee
The
strength of this formula comes from the fact that the capacity has an
alternative representation through the Dirichlet variational
principle (see e.g. \cite{fukushima}),
\be\label{cap.1}
\capa(A,D)=\inf_{h\in\HH}\Phi(h),
\ee where
\be\label{cap.2}
\HH=\Big\{h\in W^{1,2}(\mathbb{R}^N,
e^{-F_{\g,N}(u)/\e}du)\,|\,\forall z\,, h(z)\in[0,1]\,,
h_{|A}=1\,, h_{|D}=0\Big\},
\ee
and the Dirichlet form $\Phi$ is
given, for $h\in\HH$, as
\be\label{cap.3}
\Phi(h)=\e\int_{(A\cup
D)^c}e^{-F_{\g,N}(u)/\e}\norm{\nabla h(u)}_2^2du.
\ee

\begin{remark}
Formula \eqref{key.3} gives an average of the
mean transition time  with respect to the equilibrium
measure, that we will extensively use in what follows. A way to
obtain the quantity $\E_{z}[\t_D]$ consists in using H\"older and
Harnack estimates \cite{gt} (as developed in Corollary \ref{sharp-point})\cite{bovier04},
but it is far from obvious whether this can be extended to give estimates that are uniform in $N$.
\end{remark}

Formula \eqref{key.3} highlights the two terms for which we will
prove uniform estimates: the capacity (Proposition \ref{capacity}) and
the mass of $h_{A,D}$ (Proposition \ref{numerator}).

\subsection{Description of the Potential}

Let us describe in detail the potential $F_{\g,N}$, its
stationary points, and in particular the minima and  the 1-saddle
points, through which the transitions occur.

The coupling strength $\g$ specifies the geometry of $F_{\g,N}$. For
instance, if we set $\g=0$, we get a set of $N$ bistable
independent particles, thus the stationary points are
\be\label{potential.2}
x^*=(\xi_1,\dots,\xi_N)\quad\forall
i\in\llbracket1,N\rrbracket ,\, \xi_i\in\{-1,0,1\}.
\ee
To characterize their stability, we have to look to their Hessian
matrix whose signs of the eigenvalues give us the index saddle of
the point. It can be easily shown that, for $\gamma=0$, the minima
are those of the form \eqref{potential.2} with no zero coordinates
and the 1-saddle points have just one zero coordinate. As
$\gamma$ increases, the structure of the potential evolves and the
number of stationary points decreases from $3^N$ to $3$. We notice
that, for all $\gamma$, the points 
\be\label{potential.3}
I_{\pm}=\pm(1,1,\cdots,1)\quad O=(0,0,\cdots,0)
\ee 
are
stationary, fur\-ther\-more $I_{\pm}$ are minima. If we
calculate the Hessian at the point $O$, we have
\be\label{potential.4} 
\nabla^2F_{\gamma,N}(O)=
\begin{pmatrix}
-1+\gamma&-\frac{\gamma}{2}&0&\cdots&0&-\frac{\gamma}{2}\\
-\frac{\gamma}{2}&-1+\gamma&-\frac{\gamma}{2}&&&0\\
0&-\frac{\gamma}{2}&\ddots&\ddots&&\vdots\\
\vdots&&\ddots&\ddots&\ddots&0\\
0&&&\ddots&\ddots&-\frac{\gamma}{2}\\
-\frac{\gamma}{2}&0&\cdots&0&-\frac{\gamma}{2}&-1+\gamma
\end{pmatrix},
\ee
whose eigenvalues are, for all $\g>0$ and for $0\leq k\leq N-1$,
\be\label{eigenvalue.1}
\lambda_{k,N}=-\left(1-2\gamma\sin^2\left(\frac{k\pi}{N}\right)\right).
\ee
Set, for $k\geq1$, $\g_k^N=\frac{1}{2\sin^2\left(k\pi/N\right)}$.
Then these eigenvalues can be written in  the form
\be\label{eigenvalue.2}
\begin{cases}
\l_{k,N}&=\l_{N-k,N}=-1+\frac{\g}{\g^N_k},\, 1\leq k\leq N-1\\
\l_{0,N}&=\l_0=-1.
\end{cases}
\ee
Note that $(\g^N_k)_{k=1}^{\lfloor N/2\rfloor}$ is a
decreasing sequence, and so as $\g$ increases, the number of
non-positive eigenvalues $(\l_{k,N})_{k=0}^{N-1}$ decreases. When $\g>\g_1^N$,
the only negative eigenvalue is $-1$. Thus
\be
\g_1^N=\frac{1}{2\sin^2(\pi/N)}
\ee
is the threshold of the synchronization regime.

\begin{lemma}[Synchronization Regime]
If $\gamma>\g^N_1$, the only stationary points of $F_{\g,N}$
are $I_{\pm}$ and $O$. $I_\pm$ are minima, $O$ is a 1-saddle.
\end{lemma}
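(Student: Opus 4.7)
The plan is to show that every stationary point of $F_{\gamma,N}$ must be spatially constant, after which enumeration and classification by the Hessian are immediate. I would start from the critical-point equations
\[ x_i^3 - x_i + \tfrac{\gamma}{2}(2x_i - x_{i-1} - x_{i+1}) = 0, \qquad i \in \Lambda, \]
and decompose $x_i = \bar{x} + y_i$ with $\bar{x} = N^{-1}\sum_j x_j$, so $\sum_i y_i = 0$. Expanding $(\bar{x}+y_i)^3$ and noting that the discrete Laplacian $(\Delta y)_i := 2y_i - y_{i-1} - y_{i+1}$ annihilates constants, the system rewrites as
\[ (3\bar{x}^2 - 1)y_i + 3\bar{x}y_i^2 + y_i^3 + \tfrac{\gamma}{2}(\Delta y)_i = \bar{x} - \bar{x}^3. \]
Taking the $\ell^2$ inner product with $y$ and using $\sum y_i = 0$ gives the scalar identity
\[ \langle y, L_{\bar x} y\rangle + 3\bar{x}\sum_i y_i^3 + \sum_i y_i^4 = 0, \qquad L_{\bar x} := (3\bar{x}^2 - 1)I + \tfrac{\gamma}{2}\Delta. \]

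Next I would exploit that $L_{\bar x}$ is circulant, hence diagonalised by the discrete Fourier basis on $\Z/N\Z$, with eigenvalues $3\bar{x}^2 - 1 + 2\gamma\sin^2(k\pi/N)$ for $k = 0,\ldots,N-1$. The constraint $\sum y_i = 0$ kills the zero mode, so the quadratic form is bounded below by its value on the first nontrivial mode:
\[ \langle y, L_{\bar x} y\rangle \geq (3\bar{x}^2 + \gamma/\gamma_1^N - 1)\|y\|_2^2 \geq (3\bar{x}^2 + \delta)\|y\|_2^2, \]
with $\delta := \gamma/\gamma_1^N - 1 > 0$ thanks to the hypothesis $\gamma > \gamma_1^N$. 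The nonlinear contribution is controlled by Young's inequality $3|\bar{x}||y_i|^3 \leq \tfrac{9}{4}\bar{x}^2 y_i^2 + y_i^4$, which summed over $i$ gives $-3\bar{x}\sum y_i^3 - \sum y_i^4 \leq \tfrac{9}{4}\bar{x}^2\|y\|_2^2$. Inserting both bounds into the scalar identity forces $(\tfrac{3}{4}\bar{x}^2 + \delta)\|y\|_2^2 \leq 0$, so $y = 0$. Hence every stationary point is of the form $\bar{x}(1,\ldots,1)$, and $\bar{x}^3 = \bar{x}$ leaves precisely $\bar{x} \in \{-1,0,1\}$, i.e.\ the three points $I_\pm$ and $O$.

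The classification is then read off the Hessian. At $I_\pm$ the diagonal equals $3 - 1 + \gamma = 2 + \gamma$ and the off-diagonal $-\gamma/2$ terms produce the circulant eigenvalues $2 + 2\gamma\sin^2(k\pi/N) > 0$, so $I_\pm$ are minima. At $O$ formula \eqref{eigenvalue.2} gives the eigenvalue $\lambda_0 = -1$ together with $\lambda_{k,N} = -1 + \gamma/\gamma_k^N > 0$ for $1 \leq k \leq N-1$, using $\gamma_k^N \leq \gamma_1^N < \gamma$; thus $O$ has Morse index exactly one. The most delicate point in the plan is whether the constants match: the spectral lower bound contributes $3\bar{x}^2$ while Young's inequality produces $\tfrac{9}{4}\bar{x}^2$, and the argument succeeds only because $3 - \tfrac{9}{4} = \tfrac{3}{4} > 0$ leaves room for the strict gap $\delta$. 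Should this margin ever fail in a related setting, one would have to refine either the nonlinear bound (keeping the full $\sum y_i^4$ instead of discarding it) or isolate the $k=1$ Fourier mode more carefully; in the present synchronisation regime, however, Young's inequality suffices.
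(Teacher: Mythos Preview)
Your argument is correct. The decomposition $x_i = \bar x + y_i$, the spectral lower bound on the mean-zero subspace, and the Young inequality $3|\bar x||y_i|^3 \le \tfrac{9}{4}\bar x^2 y_i^2 + y_i^4$ (which is in fact the perfect square $y_i^2(\tfrac{3}{2}|\bar x| - |y_i|)^2 \ge 0$) combine exactly as you claim to force $y=0$. The Hessian classification then follows from \eqref{eigenvalue.2} and \eqref{eigenvalue.3}.

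This is a genuinely different route from what the paper invokes. The paper does not prove the lemma at all but defers to \cite{berglund107}, where the result is obtained via a Lyapunov function argument tracking the gradient flow. Your proof is purely algebraic and static: it works directly on the critical-point equations and needs nothing dynamical. The advantage of your approach is that it is short, self-contained, and makes transparent exactly where the threshold $\gamma > \gamma_1^N$ enters (the spectral gap $\delta$ on the mean-zero subspace). The Lyapunov method, by contrast, yields more global information about the flow structure of the potential, which is useful elsewhere in \cite{berglund107} but is overkill for the bare statement of this lemma.
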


This lemma was proven in \cite{berglund107} by using a Lyapunov function.
This configuration is called
the synchronization regime because the coupling between the
particles is so strong that they all pass simultaneously through their
respective saddle points in a transition between the stable
equilibria ($I_\pm$).

In this paper, we will focus on this regime.

\subsection{Results for fixed $N$}

Let $\r>0$ and set $B_\pm\equiv B_{\r}(I_\pm)$, where $B_\r(x)$ denotes the 
ball of radius $\r$ centered at $x$. 
Equation \eqref{key.3} gives, with $A=B_-$ and $D=B_+$,
\be\label{key.4}
\E_{\n_{B_-,B_+}}[\t_{B_+}] =
\frac{\int_{B_+^c}h_{B_-,B_+}(y)e^{-F_{\g,N}(y)/\e}dy}{\capa(B_-,B_+)}.
\ee
First, we obtain a sharp estimate for this transition time
for fixed $N$:
\begin{theorem}\label{sharp}
Let $N>2$ be given. For $\g>\g^N_1=\frac{1}{2\sin^2(\pi/N)}$,
let $\sqrt{N}>\r\geq \epsilon>0$. Then
\be\label{sharp.1}
\mathbb{E}_{\n_{B_-,B_+}}[\tau_{B_+}]=2\pi c_Ne^{\frac{N}{4\epsilon}}(1+O(\sqrt{\e|\ln\e|^3}))
\ee
with
\be\label{cn.1}
c_N=\bigg[1-\frac{3}{2+2\g}\bigg]^{\frac{e(N)}{2}}
\prod_{k=1}^{\lfloor\frac{N-1}{2}\rfloor}
\bigg[1-\frac{3}{2+\frac{\g}{\g^N_k}}\bigg]
\ee
where $e(N)=1$ if $N$ is even and $0$ if $N$ is odd.
\end{theorem}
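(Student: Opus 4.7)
The starting point is the potential-theoretic identity \eqref{key.4}, which reduces the statement to sharp asymptotics for the capacity $\capa(B_-,B_+)$ in the denominator and the integral $\int_{B_+^c} h_{B_-,B_+}(y) e^{-F_{\g,N}(y)/\e}\,dy$ in the numerator. Since $\g>\g_1^N$ places us in the synchronisation regime, the landscape of $F_{\g,N}$ is elementary: $I_\pm$ are the only minima and $O$ is the unique $1$-saddle separating them. Accordingly, the capacity will be governed by the Gaussian fluctuations near $O$, while the numerator mass concentrates near $I_-$.

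For the capacity I would derive matching upper and lower bounds via \eqref{cap.1}--\eqref{cap.3}. For the upper bound, diagonalise $\nabla^2 F_{\g,N}(O)$ in the Fourier basis, take a one-dimensional error-function profile in the unstable coordinate $u_0$ (the $k=0$ mode, with eigenvalue $\l_{0,N}=-1$) on a slab of half-width $\sqrt{\e|\ln\e|}$ around $O$, and set $h$ equal to $1$ on the $I_-$-side and $0$ on the $I_+$-side of the slab. Replacing $F_{\g,N}$ by its quadratic Taylor expansion on the slab and performing the resulting Gaussian integrals yields
\be
\capa(B_-,B_+)\leq (1+O(\sqrt{\e|\ln\e|^3}))\,\frac{|\l_{0,N}|}{2\pi}\,\frac{(2\pi\e)^{N/2}}{\sqrt{|\det\nabla^2 F_{\g,N}(O)|}}\,e^{-F_{\g,N}(O)/\e}.
\ee
For the matching lower bound, restrict an arbitrary admissible $h$ to a tube of width $\sqrt{\e|\ln\e|}$ around the unstable manifold of $O$ and, after integrating out the stable directions by co-area or Cauchy--Schwarz, reduce the $N$-dimensional Dirichlet form to a one-dimensional variational problem whose minimum can be evaluated explicitly against the Gaussian weight.

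For the numerator I use Laplace's method around $I_-$. The function $h_{B_-,B_+}$ is $L$-harmonic on $(B_-\cup B_+)^c$, equal to $1$ on $\partial B_-$, $0$ on $\partial B_+$, and valued in $[0,1]$ by the maximum principle. A Harnack inequality together with the energy barrier $F_{\g,N}(O)-F_{\g,N}(I_-)=N/4$ separating $I_-$ from $B_+$ yields $h_{B_-,B_+}=1+O(e^{-cN/\e})$ on the $\sqrt{\e|\ln\e|}$-neighbourhood of $I_-$ that carries the Laplace mass. Expanding $F_{\g,N}$ quadratically at $I_-$ then gives
\be
\int_{B_+^c} h_{B_-,B_+}(y) e^{-F_{\g,N}(y)/\e}\,dy = (1+O(\sqrt{\e|\ln\e|^3}))\, e^{-F_{\g,N}(I_-)/\e}\,\frac{(2\pi\e)^{N/2}}{\sqrt{\det\nabla^2 F_{\g,N}(I_-)}}.
\ee
Dividing the two estimates the $(2\pi\e)^{N/2}$ cancels, leaving
\be
\E_{\n_{B_-,B_+}}[\t_{B_+}] = \frac{2\pi}{|\l_{0,N}|}\sqrt{\frac{|\det\nabla^2 F_{\g,N}(O)|}{\det\nabla^2 F_{\g,N}(I_-)}}\, e^{(F_{\g,N}(O)-F_{\g,N}(I_-))/\e}\,(1+O(\sqrt{\e|\ln\e|^3})).
\ee

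To finish one evaluates the geometric factor. Since $F_{\g,N}(O)=0$ and $F_{\g,N}(I_-)=-N/4$ the exponential is $e^{N/(4\e)}$. The spectrum of $\nabla^2 F_{\g,N}(O)$ is given by \eqref{eigenvalue.2}, and since $\nabla^2 F_{\g,N}(I_-)=\nabla^2 F_{\g,N}(O)+3\,\Id$ its eigenvalues at $I_-$ are $2+\g/\g_k^N$ (with the $k=0$ eigenvalue equal to $2$). Writing $(\g/\g_k^N-1)/(2+\g/\g_k^N)=1-3/(2+\g/\g_k^N)$ and using the symmetry $\l_{k,N}=\l_{N-k,N}$ to pair conjugate modes collapses the product onto $k=1,\dots,\lfloor (N-1)/2\rfloor$, with the middle mode $k=N/2$ contributing with exponent $e(N)/2$ when $N$ is even; this reproduces $c_N$ as in \eqref{cn.1}. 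The main obstacle is the capacity lower bound: constructing a good test function is immediate, but matching it from below through the tube around the unstable manifold of $O$ requires carefully propagating the one-dimensional reduction beyond the quadratic zone and is what fixes the $\sqrt{\e|\ln\e|^3}$ error term. The second delicate point is the Harnack estimate certifying that $h_{B_-,B_+}$ is indeed $1+O(e^{-cN/\e})$ throughout the Laplace support around $I_-$; these two ingredients are exactly what must be made uniform in $N$ in the later sections.
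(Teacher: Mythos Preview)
Your argument is correct in substance, but it is considerably more than what the paper actually does for this fixed-$N$ statement. The paper's proof of Theorem~\ref{sharp} is essentially a citation: it verifies that $F_{\g,N}$ satisfies the regularity and growth hypotheses of Theorem~3.1 in \cite{bovier04}, observes that for $\g>\g_1^N$ the critical set is exactly $\{I_\pm,O\}$, and then reads off \eqref{sharp.2} directly from that result. The remaining work is purely algebraic: computing $\nabla^2 F_{\g,N}(I_-)=\nabla^2 F_{\g,N}(O)+3\,\Id$, evaluating the two determinants via the eigenvalues \eqref{eigenvalue.2}, and pairing $k\leftrightarrow N-k$ to obtain \eqref{cn.1}. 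No capacity bounds, no Laplace estimates, no Harnack argument are carried out here; all of that is encapsulated in the cited theorem.

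What you have sketched is, in effect, the proof of the BEGK theorem itself, specialised to this potential. That is a legitimate route, and your outline of the upper/lower capacity bounds and the numerator Laplace estimate is accurate. In fact it is precisely the strategy the paper adopts in Section~4 for the \emph{uniform} result (Theorem~\ref{main}), where one cannot invoke \cite{bovier04} as a black box because the error terms there are not controlled in $N$. Your closing remarks correctly anticipate this: the two delicate points you flag---the tube-based capacity lower bound and the control of $h_{B_-,B_+}$ near $I_-$---are exactly what Propositions~\ref{capacity} and~\ref{numerator} make uniform. For the fixed-$N$ theorem, though, the paper simply quotes the existing result and computes the constants.
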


\begin{remark} The power $3$ at $\ln \e$ is missing in \cite{bovier04} 
by mistake.
\end{remark}

\begin{remark}
As mentioned above, for any fixed dimension, we can replace the
probability measure $\nu_{B_-,B_+}$ by the Dirac measure on the 
single point $I_-$,
using H\"older and Harnack inequalities \cite{bovier04}. 
This gives the  following
corollary:
\begin{corollary}\label{sharp-point}
Under the assumptions of Theorem \ref{sharp}, there exists $\a>0$
such that
\be\label{sharp-point.1}
\mathbb{E}_{I_-}[\tau_{B_+}]=2\pi
c_Ne^{\frac{N}{4\epsilon}}(1+O(\sqrt{\e|\ln\e|^3}). 
\ee
\end{corollary}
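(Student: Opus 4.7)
The plan is to derive Corollary \ref{sharp-point} from Theorem \ref{sharp} by arguing that $x\mapsto \E_x[\t_{B_+}]$ is essentially constant on the boundary $\partial B_-$, so that its weighted mean against $\n_{B_-,B_+}$ agrees with its value at the single point $I_-$ up to a multiplicative factor that can be absorbed into the stated error. Set $u(x)\equiv \E_x[\t_{B_+}]$. By the Dynkin formula, $u$ solves the elliptic equation $Lu=-1$ on $B_+^c$ with boundary data $u=0$ on $\partial B_+$, where $L$ is the generator \eqref{generator}. Theorem \ref{sharp} tells us that $u$ is of exponential order $e^{N/(4\e)}$ on a neighbourhood of $I_-$; after dividing through by this quantity, the source term becomes exponentially small, so $u/u(I_-)$ is, up to negligible terms, a bounded positive $L$-harmonic function on such a neighbourhood.

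Next, I would apply classical interior elliptic regularity for the uniformly elliptic operator $L$ on the compact set $K\equiv \bar B_{2\r}(I_-)$, which lies inside $B_+^c$ at positive distance from $\partial B_+$. Hölder interior estimates (see, e.g., \cite{gt}) give some $\a\in(0,1]$ and $C>0$ with
\be
|u(x)-u(y)|\leq C\,\|x-y\|_2^{\a}\sup_{K}u,\qquad x,y\in \bar B_-,
\ee
and the elliptic Harnack inequality, applied to the shifted nonnegative function $u+c$ with a suitable constant $c=O(1)$, gives $\sup_K u\leq C'\inf_K u$. Combined, these yield $u(x)=u(I_-)(1+o(1))$ uniformly in $x\in \partial B_-$, and therefore
\be
\E_{\n_{B_-,B_+}}[\t_{B_+}]=\int_{\partial B_-}u(x)\,\n_{B_-,B_+}(dx)=\E_{I_-}[\t_{B_+}](1+o(1)).
\ee
Plugging this into \eqref{sharp.1} produces \eqref{sharp-point.1}.

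The delicate point is quantitative: the $o(1)$ above must be controlled by the prescribed error $O(\sqrt{\e|\ln\e|^3})$. Since the coefficients of $\e^{-1}L$ are smooth on $K$ with norms independent of $\e$, the constants appearing in the Hölder and Harnack estimates degrade at worst polynomially in $\e^{-1}$ as $\e\to 0$; this is harmless relative to the exponential leading term $e^{N/(4\e)}$. Because $N$ is fixed throughout Corollary \ref{sharp-point}, this uniformity is enough, following exactly the line of argument indicated in \cite{bovier04}.
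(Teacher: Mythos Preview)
The paper itself gives no proof of this corollary; it simply states in the surrounding remark that one ``can replace the probability measure $\nu_{B_-,B_+}$ by the Dirac measure on the single point $I_-$, using H\"older and Harnack inequalities'' and refers to \cite{bovier04}. Your approach is therefore exactly in the spirit of what the paper intends.

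That said, two technical points in your sketch do not go through as written. First, the claim that ``the coefficients of $\e^{-1}L$ are smooth on $K$ with norms independent of $\e$'' is false: $\e^{-1}L=\Delta-\e^{-1}\nabla F_{\g,N}\cdot\nabla$, and the drift $\e^{-1}\nabla F_{\g,N}$ blows up like $\e^{-1}$ on any fixed compact set. The correct observation is that near the critical point $I_-$ one has $\nabla F_{\g,N}(x)=O(\|x-I_-\|)$, so after rescaling $y=(x-I_-)/\sqrt\e$ the drift becomes $O(1)$ on a ball of radius $O(\sqrt\e)$ in the original coordinates. Second, with your choice $K=\bar B_{2\r}(I_-)$ the H\"older bound $|u(x)-u(y)|\le C\|x-y\|^\a\sup_K u$ combined with Harnack only yields $|u(x)-u(I_-)|\le C'\,u(I_-)$ for $x\in\partial B_-$, since $\|x-I_-\|=\r$ and the rescaled H\"older constant scales like $\r^{-\a}$; this is $O(1)$, not $o(1)$. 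To get a vanishing relative error you must apply the (rescaled) H\"older estimate on the natural ball $B_{c\sqrt\e}(I_-)$ and then take $\r\ll\sqrt\e$ (e.g.\ $\r=\e$, which is permitted by the hypothesis $\r\ge\e$), so that $|u(x)-u(I_-)|\le C(\r/\sqrt\e)^\a\sup u\le C'\e^{\a/2}u(I_-)$. This is essentially the argument carried out in \cite{bovier04}, and it explains the otherwise dangling ``there exists $\a>0$'' in the statement.
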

\end{remark}

\begin{proof} [Proof of the theorem]
We apply Theorem 3.2 in \cite{bovier04}. For
$\g>\g^N_1=\frac{1}{2\sin^2(\pi/N)}$, let us recall that  there
are only three stationary points: two minima $I_{\pm}$ and one
saddle point $O$. One easily checks that $F_{\g,N}$ satisfies
the following assumptions:
\begin{itemize}
\item $F_{\g,N}$ is polynomial in the $(x_i)_{i\in\Lambda}$ and
so clearly $C^3$ on $\mathbb{R}^N$. \item
$F_{\g,N}(x)\geq\frac{1}{4}\sum_{i\in\Lambda}x_i^4$ so
$F_{\g,N}\underset{x\rightarrow\infty}{\longrightarrow}+\infty$.
\item $\norm{\nabla F_{\g,N}(x)}_2\sim\norm{x}_3^3$ as
$\norm{x}_2\to\infty$. \item As $\Delta
F_{\g,N}(x)\sim3\norm{x}_2^2$ ($\norm{x}_2\to\infty$), then
$\norm{\nabla F_{\g,N}(x)}-2\Delta
F_{\g,N}(x)\sim\norm{x}_3^3$.
\end{itemize}

The Hessian matrix at the minima $I_\pm$ has the form
\be\label{potential.5}
\nabla^2F_{\g,N}(I_{\pm})= \nabla^2F_{\g,N}(O)+3 \Id,
\ee
whose eigenvalues are simply
\be\label{eigenvalue.3}
\n_{k,N}=\l_{k,N}+3.
\ee
%
Then Theorem 3.1 of \cite{bovier04} can be applied and yields, for
$\sqrt{N}>\r>\epsilon>0$, (recall the the negative eigenvalue of the Hessian at
$O$ is $-1$)
\be\label{sharp.2} 
\mathbb{E}_{\n_{B_-,B_+}}[\tau_{B_+}]=
\frac{2\pi e^{\frac{N}{4\epsilon}} \sqrt{\abs{\det(\nabla^2F_{\g,N}(O))}}}
      {\sqrt{\det(\nabla^2F_{\g,N}(I_-))}}
(1+O(\sqrt{\epsilon}\abs{\ln\epsilon}^3)). 
\ee
Finally, \eqref{eigenvalue.2} and \eqref{eigenvalue.3} give:
\be\label{det.1}
\det(\nabla^2F_{\g,N}(I_-))
=\prod_{k=0}^{N-1}\nu_{k,N}
=2\nu_{N/2,N}^{e(N)}\prod_{k=1}^{\lfloor\frac{N-1}{2}\rfloor}\nu_{k,N}^2
=2^{N}(1+\g)^{e(N)}\prod_{k=1}^{\lfloor\frac{N-1}{2}\rfloor}
\bigg(1+\frac{\g}{2\g^N_k}\bigg)^2
\ee

\be\label{det.2}
\abs{\det(\nabla^2F_{\g,N}(O))}
=\prod_{k=0}^{N-1}\l_{k,N}
=\l_{N/2,N}^{e(N)}\prod_{k=1}^{\lfloor\frac{N-1}{2}\rfloor}\l_{k,N}^2
=(2\g-1)^{e(N)}\prod_{k=1}^{\lfloor\frac{N-1}{2}\rfloor}
\bigg(1-\frac{\g}{\g^N_k}\bigg)^2. \ee Then, \be\label{cn.2} c_N
=\frac{\sqrt{\det(\nabla^2F_{\g,N}(I_-))}}
{\sqrt{\abs{\det(\nabla^2F_{\g,N}(O))}}}
=\bigg[1-\frac{3}{2+2\g}\bigg]^{\frac{e(N)}{2}}
\prod_{k=1}^{\lfloor\frac{N-1}{2}\rfloor}
\bigg[1-\frac{3}{2+\frac{\g}{\g^N_k}}\bigg]
\ee 
and Theorem
\ref{sharp} is proved.
\end{proof}

Let us point out that the use of these estimates is a major
obstacle to obtain a mean transition time starting from a single
stable point with uniform error terms. That is the reason why we
have introduced the equilibrium probability. However, there are
still several difficulties to be overcome if we want to pass to the
limit $N\uparrow \infty$.

\begin{itemize}
\item[(i)] We must show that the prefactor $c_N$ has a limit as
$N\uparrow\infty$.
\item[(ii)] The exponential term in the hitting
time tends to infinity with $N$. This suggests that one needs to
rescale the potential $F_{\g,N}$ by a factor $1/N$, or equivalently,
to increase the noise strength by a factor $N$.
\item[(iii)] One
will need uniform control of error estimates in $N$ to be able to
infer the metastable behavior of the infinite dimensional system.
This will be the most subtle of the problems involved.
\end{itemize}

\section{Large $N$ limit}

As mentioned above, in order to prove a limiting result as $N$
tends to infinity, we need to rescale the potential to eliminate
the $N$-dependence in the exponential. Thus henceforth we replace
$F_{\g,N}(x)$ by
\be \Eq(rescaled.1) G_{\g,N}(x)= N^{-1}F_{\g,N}(x). \ee

This choice actually has a very nice side effect. Namely, as we
always want to be in the regime where $\g\sim \g_1^N\sim N^{2}$,
it is natural to parametrize the coupling constant with a fixed
$\m>1$ as
\be\label{eigenvalue.4}
\g^N= \m \g^N_1
=\frac{\m}{2\sin^2(\frac{\pi}{N})} =\frac{\m
N^2}{2\pi^2}(1+o(1)).
\ee
Then, if we replace
the lattice by a lattice of spacing $1/N$, i.e. $(x_i)_{i\in\L}$ is
the discretization of a real function $x$ on $[0,1]$
($x_i=x(i/N)$), the resulting potential converges formally to 
\be
\Eq(rescaled.2) 
G_{\g^N,N}(x)\underset{N\to\infty}{\rightarrow}
\int_0^1 \left(\frac 14[x(s)]^4-\frac12[x(s)]^2 \right) ds +
\frac{\m}{4\pi^2}\int_0^1 \frac{\left[ x'(s)\right]^2}2ds,
\ee with
$x(0)=x(1)$.

In the Euclidean norm, we have $\|I_\pm\|_2=\sqrt{N}$, which suggests   to
rescale the size of neighborhoods. We consider, for $\r>0$,
the neighborhoods $B^N_\pm=B_{\r\sqrt{N}}(I_\pm)$.
The volume $V(B^N_-)=V(B^N_+)$ goes to $0$ if and only if
$\r<1/2\pi e$, so given such a $\r$, the balls $B^N_\pm$ are not
as large as one might think. Let us also  observe that
\be
\frac1{\sqrt{N}}\|x\|_{2}\underset{N\to
\infty}{\longrightarrow}\|x\|_{L^2[0,1]}=\int_0^1|x(s)|^2ds.
\ee
Therefore, if $x\in
B^N_+$ for all $N$, we get in the limit, $ \|x-1\|_{L^2[0,1]}<\r.
$

\bigskip
The main result of this paper is the following uniform
version of Theorem \ref{sharp} with a rescaled potential
$G_{\g,N}$.

\begin{theorem}\label{main}
Let $\mu\in]1,\infty[$, then there exists a constant, $A$, such that for all
$N\geq 2$ and all $\e>0$, 
\be\label{main.1}
\frac1N\E_{\n_{B^N_-,B^N_+}}[\t_{B^N_+}]=2\pi c_Ne^{1/4\e}(1+R(\e,N)),
\ee
where $c_N$ is defined in Theorem \thv(sharp) and 
$|R(\e,N)|\leq A \sqrt{\e|\ln \e|^3}$.
In particular,
\be\label{main.2}
\lim_{\e\downarrow 0}\lim_{N\uparrow\infty}\frac1N e^{-1/4\e} \E_{\n_{B^N_-,B^N_+}}[\t_{B^N_+}]=2\pi V(\m)
\ee
where
\be\label{main.3}
V(\m)=\prod_{k=1}^{+\infty}\Big[\frac{\mu k^2-1}{\mu k^2+2}\Big]<\infty.
\ee
\end{theorem}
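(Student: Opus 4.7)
The plan is to apply the key representation \eqref{key.3} with $A=B_-^N$, $D=B_+^N$ and the rescaled potential $G_{\g^N,N}=F_{\g^N,N}/N$ from \eqref{rescaled.1}, writing
\[
\E_{\n_{B_-^N,B_+^N}}[\t_{B_+^N}]=\frac{\int_{(B_+^N)^c}h_{B_-^N,B_+^N}(y)\,e^{-G_{\g^N,N}(y)/\e}\,dy}{\capa(B_-^N,B_+^N)},
\]
and then to establish sharp asymptotics for the denominator (Proposition \ref{capacity}) and the numerator (Proposition \ref{numerator}) with multiplicative errors $1+O(\sqrt{\e|\ln\e|^3})$ whose implied constants are \emph{independent of $N$}. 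The rescaling is chosen so that $G(O)-G(I_-)=1/4$ independently of $N$; the overall factor $1/N$ on the left of \eqref{main.1} is exactly $|\l^-_{\nabla^2G(O)}|=1/N$ entering the Eyring--Kramers prefactor.

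For the capacity I would use the Dirichlet principle \eqref{cap.1} with a test function $h^*$ that, inside an $\sqrt{\e|\ln\e|}$-box around the saddle $O$, depends only on the unstable coordinate of $\nabla^2 G(O)$ (the eigenvector with eigenvalue $-1/N$) through the optimal one-dimensional error-function profile, and equals $1$ on the $I_-$-side and $0$ on the $I_+$-side. Diagonalising $\nabla^2 G(O)$ in the discrete Fourier basis decouples the resulting Gaussian integral into a product over the eigenvalues $\l_{k,N}/N$: the unstable direction contributes $|\l^-_G|/(2\pi)=1/(2\pi N)$ and the stable directions $\prod_{k=1}^{N-1}\sqrt{2\pi\e N/\l_{k,N}}$, yielding
\[
\capa(B_-^N,B_+^N)\leq \frac{1}{2\pi N}\,e^{-G(O)/\e}\sqrt{\frac{(2\pi\e N)^N}{|\det\nabla^2 F_{\g^N,N}(O)|}}\bigl(1+O(\sqrt{\e|\ln\e|^3})\bigr),
\]
with a $N$-uniform error because the quartic remainder of $F_{\g^N,N}$ is diagonal in the coordinate basis (so its contribution factorises and is bounded coordinate-wise by $e^{\,\text{const}\cdot\e|\ln\e|}$) and the cubic remainder is controlled within the box by a Cauchy--Schwarz argument that produces no power of $N$. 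The matching lower bound is the delicate step: I would invoke the Berman--Konsowa/Thomson dual variational principle with a divergence-free unit flow concentrated along the unstable direction, or equivalently restrict the Dirichlet form to a thin tube around the minimum-energy path $I_-\leadsto O\leadsto I_+$ and use monotonicity of $h_{B_-^N,B_+^N}$ along this path.

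For the numerator the natural Laplace asymptotic around $I_-$ yields
\[
\int_{(B_+^N)^c}h_{B_-^N,B_+^N}(y)\,e^{-G(y)/\e}\,dy = e^{-G(I_-)/\e}\sqrt{\frac{(2\pi\e N)^N}{\det\nabla^2 F_{\g^N,N}(I_-)}}\bigl(1+O(\sqrt{\e|\ln\e|^3})\bigr),
\]
again with an $N$-uniform remainder. Two points need a dimension-free treatment: (i) on the Gaussian window of radius $\sqrt{\e N|\ln\e|}$ around $I_-$ the harmonic function $h_{B_-^N,B_+^N}$ is $1+O(\sqrt{\e|\ln\e|^3})$, which follows from a super-exponentially small potential-theoretic bound on $\P_y[\t_{B_+^N}<\t_{B_-^N}]$ exploiting the uniform lower bound $\nabla^2G(I_-)\succeq (2/N)\,\Id$; and (ii) the tail $\{y:G(y)\geq G(I_-)+c\e|\ln\e|\}$ contributes negligibly, which follows from the coercive bound $F_{\g,N}(x)\geq \tfrac14\sum x_i^4$ and a coordinate-wise Gaussian comparison producing a constant independent of $N$.

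Taking the ratio reproduces $2\pi N c_N e^{1/(4\e)}(1+R(\e,N))$ with $|R|\leq A\sqrt{\e|\ln\e|^3}$ uniformly, and dividing by $N$ yields \eqref{main.1}. For the double limit, the $N$-dependence of $c_N$ sits entirely in \eqref{cn.1}; using
\[
\frac{\g^N}{\g_k^N}=\m\,\frac{\sin^2(k\pi/N)}{\sin^2(\pi/N)}\xrightarrow[N\to\infty]{}\m k^2,
\]
each factor converges to $(\m k^2-1)/(\m k^2+2)$, and the elementary inequality $\sin^2(k\pi/N)/\sin^2(\pi/N)\geq 4k^2/\pi^2$ for $1\leq k\leq N/2$ gives the uniform bound $|\log(1-3/(2+\g^N/\g_k^N))|\leq C/k^2$, so dominated convergence delivers $c_N\to V(\m)$ while summability forces $V(\m)<\infty$. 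Sending $N\to\infty$ first (legitimate since $R$ is $N$-uniform) and then $\e\downarrow 0$ yields \eqref{main.2}. \emph{The main obstacle} is the uniform-in-$N$ lower bound on the capacity: exponential concentration confines the numerator analysis to a compact neighbourhood of $I_-$, but the Dirichlet lower bound must propagate information along a one-dimensional profile while the ambient dimension grows, and a naive argument accumulates factors of $N$ that must be absorbed by exploiting the block-diagonal structure of $\nabla^2 F_{\g,N}$ in the Fourier basis.
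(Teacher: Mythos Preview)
Your overall architecture matches the paper exactly: apply \eqref{key.4}, prove uniform-in-$N$ asymptotics for the capacity (Proposition \ref{capacity}) and the numerator (Proposition \ref{numerator}), and combine with $c_N\to V(\mu)$ (Proposition \ref{convergence}). Your capacity lower bound by restriction to a thin tube in the unstable $z_0$-direction is also precisely the paper's method (Lemma \ref{lower}), so the Berman--Konsowa alternative is not needed. For the numerator the paper is actually simpler than your step (i): the small box $C_\d(I_-)$ lies inside $B_-^N$, so $h^*\equiv 1$ there exactly and no estimate on $h^*$ is required; the tail is handled by the pointwise bound $G_{\g,N}(x)\geq -\tfrac12 z_0^2+\tfrac14 z_0^4+\tfrac12\sum_{k\geq1}\l_{k,N}|z_k|^2$ obtained via Cauchy--Schwarz on $z_0^4$ (Lemma \ref{tube.2}), which is sharper for this purpose than the coercivity $F\geq\tfrac14\|x\|_4^4$ you invoke.

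The genuine gap is your mechanism for controlling the quartic remainder uniformly in $N$. Your claim that ``the quartic remainder is diagonal in the coordinate basis, so its contribution factorises and is bounded coordinate-wise'' does not go through: $\sum_i x_i^4$ is diagonal in the original $x$-coordinates, but the Hessian at $O$ is diagonal only in the Fourier coordinates $z$, and these bases are not simultaneously diagonal. Since the Gaussian integral and the box $C_\d$ must be set up in the $z$-variables (to exploit the explicit eigenvalues $\l_{k,N}$), the term $\tfrac{1}{4N}\|x(Nz)\|_4^4$ is a genuinely coupled quartic in all the $z_k$ and does not factor; a naive sup bound on the box $\{|z_k|\leq \d/\sqrt{\l_{k,N}}\}$ is not $O(\d^4)$ uniformly in $N$. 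The paper's key device (Lemmas \ref{Hausdorff-Young} and \ref{norm}) is the Hausdorff--Young inequality $\|x\|_4\leq C_{4/3}\|\hat x\|_{4/3,\FF}$ combined with a box whose half-widths carry \emph{growing} weights $r_{k,N}=\r_k\sim k^\a$, $0<\a<\tfrac14$, chosen so that $\sum_k \r_k^{4/3}\l_{k,N}^{-2/3}<\infty$ uniformly in $N$. This yields $\tfrac{1}{4N}\|x(Nz)\|_4^4\leq A_1\d^4$ on $C_\d$ independently of $N$, and the same mechanism controls the cubic term appearing in the tube expansion for the lower bound (Lemma \ref{l-approx}). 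This Hausdorff--Young/weighted-box idea is the missing ingredient in your sketch and is exactly what prevents the ``naive argument accumulating factors of $N$'' that you correctly flag at the end.
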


\begin{remark} 
The appearance of the factor $1/N$ may at first glance seem disturbing. 
It corresponds however to the appropriate time rescaling when scaling 
the spatial coordinates $i$ to $i/N$  in order to recover the pde limit.
\end {remark}

The proof of this theorem will be decomposed in two parts:
\begin{itemize}
\item convergence of the sequence $c_N$ (Proposition \ref{convergence});
\item uniform control of the denominator (Proposition \ref{capacity})
and the numerator (Proposition \ref{numerator}) of Formula \eqref{key.4}.
\end{itemize}

\bigskip

\paragraph{Convergence of the prefactor $c_N$.}
Our first step will be to control the behavior of $c_N$ as $N\uparrow\infty$.
We prove the following:

\begin{proposition}\label{convergence}
The sequence $c_N$ converges: for $\m>1$, we set $\g=\m\g_1^N$,
then 
\be\label{convergence.1}
\lim_{N\uparrow\infty}c_N=V(\m), 
\ee
with $V(\m)$ defined in \eqref{main.3}.
\end{proposition}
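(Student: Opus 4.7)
The plan is to express $\log c_N$ as a finite sum and apply a discrete dominated convergence argument to pass to the limit termwise.

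The boundary factor $[1-3/(2+2\gamma)]^{e(N)/2}$ converges to $1$ trivially, because $\gamma = \mu/(2\sin^2(\pi/N)) \to \infty$ as $N \to \infty$ and $e(N)\in\{0,1\}$. For the main product, I would use the identity $\gamma/\gamma_k^N = \mu\sin^2(k\pi/N)/\sin^2(\pi/N)$ to rewrite each factor as
\[
1 - \frac{3}{2+\gamma/\gamma_k^N} = \frac{\gamma/\gamma_k^N - 1}{\gamma/\gamma_k^N + 2}.
\]
For each fixed $k$, as $N\to\infty$ one has $\sin(k\pi/N)/\sin(\pi/N) \to k$, so the $k$-th factor converges pointwise to $(\mu k^2 - 1)/(\mu k^2 + 2)$, matching the $k$-th factor of $V(\mu)$.

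The main technical obstacle is that the upper limit $\lfloor (N-1)/2 \rfloor$ of the product itself grows with $N$, so pointwise convergence of each factor is not enough. I would overcome this with Jordan's inequality $2x/\pi \leq \sin x \leq x$ on $[0,\pi/2]$, which gives, uniformly in $N$,
\[
\frac{4\mu}{\pi^2}\,k^2 \;\leq\; \frac{\gamma}{\gamma_k^N} \;\leq\; \frac{\mu\pi^2}{4}\,k^2, \qquad 1 \leq k \leq N/2.
\]
Combined with the monotonicity bound $\gamma/\gamma_k^N \geq \mu > 1$ (valid for $k\leq N/2$ since $\sin(k\pi/N) \geq \sin(\pi/N)$), these estimates yield
\[
\biggl|\log\biggl(1 - \frac{3}{2 + \gamma/\gamma_k^N}\biggr)\biggr| \;\leq\; \frac{C(\mu)}{k^2}
\]
for all $N$ and all $1 \leq k \leq \lfloor (N-1)/2 \rfloor$, with $C(\mu)$ independent of $N$. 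Since $\sum_k k^{-2} < \infty$, this summable dominating sequence allows me to interchange limit and sum, yielding
\[
\log c_N \;\longrightarrow\; \sum_{k=1}^\infty \log\frac{\mu k^2 - 1}{\mu k^2 + 2} \;=\; \log V(\mu),
\]
and the same tail estimate shows that $V(\mu) \in (0,\infty)$, establishing the proposition.
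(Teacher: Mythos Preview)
Your argument is correct. Both you and the paper control the logarithm of the product over $k$, but the technical routes differ. The paper compares each factor $1-u_k^N$ directly to its limit $1-v_k$ via a quantitative Taylor-type estimate $\bigl|\gamma_1^N/\gamma_k^N-k^2\bigr|\le Ck^4/N^2$, obtaining $\bigl|\ln\!\bigl((1-u_k^N)/(1-v_k)\bigr)\bigr|\le C'/N^2$ uniformly in $k$; summing over $\lfloor(N-1)/2\rfloor$ terms gives a rate $O(1/N)$ for $\ln(c_N/V_N)$. You instead bound each log-term \emph{absolutely} by the summable majorant $C(\mu)/k^2$, using only Jordan's inequality and the crude lower bound $\gamma/\gamma_k^N\ge\mu$, and then invoke discrete dominated convergence. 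Your approach is slightly more elementary and avoids the finer sine expansion, while the paper's approach additionally delivers an explicit convergence rate $c_N=V(\mu)(1+O(1/N))$, which your dominated-convergence argument does not provide.
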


\begin{remark}
This proposition immediately leads to
\begin{corollary}\label{sharp-limit}
For $\m\in]1,\infty[$, we set $\g=\m\g_1^N$, then
\be\Eq(time-final.1)
\lim_{N\uparrow \infty} \lim_{\e\downarrow 0}
\frac{e^{-\frac{1}{4\e}}}N\mathbb{E}_{\nu_{B^N_-,B^N_+}}[\t_{B^N_+}]=2\pi V(\m).
\ee
\end{corollary}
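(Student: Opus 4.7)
The plan is to deduce Corollary \ref{sharp-limit} as an immediate consequence of Theorem \ref{sharp} applied to the rescaled potential $G_{\g^N,N}$ together with Proposition \ref{convergence}. The iterated form of the limit ($\e\downarrow 0$ first, then $N\uparrow\infty$) means no uniform-in-$N$ control is required, so the classical fixed-dimensional Eyring-Kramers asymptotics suffices for the inner limit.

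First I would verify that the argument proving Theorem \ref{sharp} applies verbatim to $G_{\g^N,N}=F_{\g^N,N}/N$ on the balls $B^N_\pm=B_{\r\sqrt{N}}(I_\pm)$, provided $\r<1$ so that $B^N_\pm$ contains only the corresponding minimum. The critical points and their Morse indices are unchanged; the polynomial growth and smoothness conditions listed in the proof of Theorem \ref{sharp} are inherited from $F_{\g^N,N}$; the barrier height becomes $G_{\g^N,N}(O)-G_{\g^N,N}(I_-)=1/4$, giving the exponential factor $e^{1/(4\e)}$; the identity $\nabla^2 G_{\g^N,N}=N^{-1}\nabla^2 F_{\g^N,N}$ divides both Hessian determinants by $N^N$, so the ratio under the square root in the Eyring-Kramers prefactor is again $c_N$; and the unique negative eigenvalue at $O$ becomes $-1/N$, contributing an extra factor $N$. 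For fixed $N$ this yields, as $\e\downarrow 0$,
\begin{equation*}
\E_{\n_{B^N_-,B^N_+}}[\t_{B^N_+}]=2\pi N c_N e^{1/(4\e)}\left(1+O\left(\sqrt{\e|\ln\e|^3}\right)\right).
\end{equation*}

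Dividing by $Ne^{1/(4\e)}$ and sending $\e\downarrow 0$ at fixed $N$ gives
\begin{equation*}
\lim_{\e\downarrow 0}\frac{e^{-1/(4\e)}}{N}\E_{\n_{B^N_-,B^N_+}}[\t_{B^N_+}]=2\pi c_N,
\end{equation*}
and Proposition \ref{convergence} then delivers the stated limit $2\pi V(\m)$ upon taking $N\uparrow\infty$.

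The genuine content of the corollary is therefore concentrated in Proposition \ref{convergence}, where the parametrization $\g=\m\g^N_1$ converts $\g/\g_k^N=\m\sin^2(\pi/N)/\sin^2(k\pi/N)$ into the ratio $\m/k^2$ in the limit, each factor $1-3/(2+\g/\g_k^N)$ converges to $(\m k^2-1)/(\m k^2+2)$, and convergence of the corresponding infinite product—equivalent to absolute summability of $\sum_k 3/(\m k^2+2)$—identifies the finite limit $V(\m)$. Everything else is an exchange of limits in the easy direction and requires no delicate control; the main obstacle is purely bookkeeping—keeping track of the extra $N$ from the negative eigenvalue and observing that the determinant quotient is invariant under the rescaling.
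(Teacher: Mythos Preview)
Your proof is correct and follows exactly the paper's approach: the rescaled Eyring--Kramers formula you derive is precisely equation \eqref{convergence.3} in the paper, the inner limit $\e\downarrow 0$ then gives $2\pi c_N$, and Proposition \ref{convergence} supplies the outer limit. One minor slip in your last paragraph: $\g/\g_k^N=\m\,\g_1^N/\g_k^N=\m\sin^2(k\pi/N)/\sin^2(\pi/N)\to \m k^2$, not $\m/k^2$; your stated limiting factor $(\m k^2-1)/(\m k^2+2)$ is nonetheless the right one.
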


\noindent Of course such a result is unsatisfactory, since it does
not tell us anything about a large system with specified fixed
noise strength. To be able to interchange the limits regarding
$\e$ and $N$, we need a uniform control on the error terms.

\end{remark}

\begin{proof}[Proof of the proposition]
The rescaling of the potential introduces a factor $\frac1N$ for
the eigenvalues, so that  \eqref{sharp.2} becomes
\bea\label{convergence.3}\nonumber
\mathbb{E}_{\n_{B^N_-,B^N_+}}[\t_{B^N_+}] &=& \frac{2\pi
e^{\frac{1}{4\epsilon}}{N^{-N/2+1}\sqrt{\abs{\det(\nabla^2F_{\g,N}(O))}}}}
 {N^{-N/2}\sqrt{\det(\nabla^2F_{\g,N}(I_-))}
}
(1+O(\sqrt{\epsilon}\abs{\ln\epsilon}^3))
\\
&=&
2\pi N c_N e^{\frac{1}{4\epsilon}}(1+O(\sqrt{\epsilon}\abs{\ln\epsilon}^3)).
\eea

Then, with $u_k^N=\frac{3}{2+\mu\frac{\g^N_1}{\g^N_k}}\ $,
\be\label{cn.3}
c_N=\bigg[1-\frac{3}{2+2\mu\g^N_1}\bigg]^{\frac{e(N)}{2}}
\prod_{k=1}^{\lfloor\frac{N-1}{2}\rfloor}\bigg[1-u_k^N\bigg].
\ee
To prove the convergence, let us consider the
$(\g^N_k)_{k=1}^{N-1}$. For all $k\geq1$, we have
\be\label{eigenvalue.5} 
\frac{\g^N_1}{\g^N_k}
=\frac{\sin^2(\frac{k\pi}{N})}{\sin^2(\frac{\pi}{N})}
=k^2+(1-k^2)\frac{\pi^2}{3N^2}+o\bigg(\frac{1}{N^2}\bigg). 
\ee
Hence, $u_k^N\underset{N\to+\infty}{\longrightarrow}v_k=\frac{3}{2+\mu
k^2}$. Thus, we want to show that 
\be\label{cn.4}
c_N\underset{N\to+\infty}{\longrightarrow}\prod_{k=1}^{+\infty}(1-v_k)=V(\m).
\ee 
Using that, for $0\leq t\leq\frac\pi2$, 
\be
\label{sin.1} 0<t^2(1-\frac{t^2}{3})\leq\sin^2(t)\leq t^2, 
\ee
we get the following estimates for $\frac{\g^N_1}{\g^N_k}$: set
$a=\left(1-\frac{\pi^2}{12}\right)$, for $1\leq k\leq N/2$,
\be\label{eigenvalue.6} 
ak^2 =\bigg(1-\frac{\pi^2}{12}\bigg)k^2
\leq k^2\bigg(1-\frac{k^2\pi^2}{3N^2}\bigg)
\leq\frac{\g^N_1}{\g^N_k}
=\frac{\sin^2(\frac{k\pi}{N})}{\sin^2(\frac{\pi}{N})}
\leq\frac{k^2}{1-\frac{\pi^2}{3N^2}}. 
\ee 
Then, for $N\geq 2$ and
for all $1\leq k\leq N/2$,
\be\label{eigenvalue.6b} 
-\frac{k^4\pi^2}{3N^2}
\leq\frac{\g^N_1}{\g^N_k}-k^2
\leq\frac{k^2\pi^2}{3N^2\left(1-\frac{\pi^2}{3N^2}\right)}
\leq\frac{k^2\pi^2}{N^2}. 
\ee 
Let us introduce
\be
\label{convergence.4}
V_m=\prod_{k=1}^{\lfloor\frac{m-1}{2}\rfloor}(1-v_k),\quad
U_{N,m}=\prod_{k=1}^{\lfloor\frac{m-1}{2}\rfloor}\Big(1-u_k^N\Big).
\ee 
Then 
\be\label{convergence.5}
\Bigabs{\ln \frac{U_{N,N}}{V_N}}
=\Bigabs{\ln\prod_{k=1}^{\lfloor\frac{N-1}{2}\rfloor}\frac{1-u_k^N}{1-v_k}}
\leq\sum_{k=1}^{\lfloor\frac{N-1}{2}\rfloor}\Bigabs{\ln
\frac{1-u_k^N}{1-v_k}}. 
\ee 
Using \eqref{eigenvalue.6} and
\eqref{eigenvalue.6b}, we obtain, for all $1\leq k\leq N/2$, 
\be
\left|\frac{v_k-u_k^N}{1-v_k}\right| =
\frac{3\m\left|\frac{\g_1^N}{\g_k^N}-k^2\right|} {\left(-1+\m
k^2\right)\left(2+\m\frac{\g_1^N}{\g_k^N}\right)} \leq \frac{\m
k^4\pi^2} {N^2\left(-1+\m k^2\right)\left(2+\m ak^2\right)} \leq
\frac{C}{N^2}
\ee 
with $C$ a constant independent of $k$.
Therefore, for $N>N_0$, 
\be\label{convergence.6} \Bigabs{\ln
\frac{1-u_k^N}{1-v_k}}
=\left|\ln\left(1+\frac{v_k-u_k^N}{1-v_k}\right)\right|
\leq\frac{C'}{N^2}. 
\ee 
Hence
\be\label{convergence.7}
\Bigabs{\ln \frac{U_{N,N}}{V_N}}
\leq
\frac{C'}{N}
\underset{N\to+\infty}{\longrightarrow}0.
\ee
As $\sum\abs{v_k}<+\infty$, we get $\lim_{N\to+\infty}V_N=V(\m)>0$,
and thus \eqref{cn.4} is proved.
\end{proof}

\bigskip

\section{Estimates on capacities}

To prove Theorem \ref{main}, we prove uniform
estimates of the denominator and numerator of
\eqref{key.3}, namely the capacity and the mass
of the equilibrium potential.

\medskip

\subsection{Uniform control in large dimensions for capacities}

A crucial step is the control of the capacity. This will be done
with the help of the Dirichlet principle \eqv(cap.1). We will
obtain the asymptotics by using a Laplace-like method. The
exponential factor in the integral \eqref{cap.3} is largely
predominant at the points where $h$ is likely to vary the most,
that is around the saddle point $O$. Therefore  we need some good
estimates of the potential near $O$.

\medskip

\subsubsection{Local Taylor approximation}

This subsection is devoted to the quadratic approximations of the
potential which are quite subtle. We will make a  change of basis
in the neighborhood of the saddle point $O$ that will
diagonalize the quadratic part.

Recall that the potential $G_{\g,N}$ is of the form
\be\Eq(potential.6) 
G_{\g,N}(x)= -\frac 1{2N} (x,[\Id-\D]x) +\frac
1{4N} \|x\|_4^4. 
\ee 
where the operator $\D$ is given by
$\D=\g\left[\Id-\frac 12(\S+\S^*)\right]$ and $(\S x)_j = x_{j+1}$. The linear
operator $(\Id-\D)=-\nabla^2F_{\g,N}(O)$ has eigenvalues $-\l_{k,N}$
and eigenvectors $v_{k,N}$ with components $v_{k,N}(j)=\o^{jk}$, with
$\o=e^{i2\pi/N}$.

\noindent Let us change coordinates  by setting 
\be\Eq(fourier.1)
\hat x_j = \sum_{k=0}^{N-1} \o^{-jk}x_k. 
\ee 
Then the inverse
transformation is given by 
\be\Eq(fourier.2) 
x_k=\frac 1N
\sum_{j=0}^{N-1} \o^{jk}\hat x_j=x_k(\hat x). 
\ee

\noindent Note that the map $x\rightarrow \hat x$ maps $\R^N$ to
the set 
\be\Eq(space.1) 
\wh \R^N=\left\{\hat x\in \C^N:\hat
x_k=\overline{\hat x_{N-k}}\right\}
\ee 
endowed with the standard inner product on $\C^N$.

\noindent  Notice  that, expressed in terms of the variables $\hat x$, the potential
\eqref{potential.1} takes the form 
\be\label{potential.7}
G_{\g,N}(x(\hat x))= \frac 1{2N^2} \sum_{k=0}^{N-1} \l_{k,N} |\hat
x_k|^2+\frac 1{4N} \|x(\hat x)\|_4^4. 
\ee

\noindent Our main concern will be the control of the
non-quadratic term in the new coordinates. To that end, we
introduce the following norms on  Fourier space:

\be\Eq(fourier.3)
\|\hat x\|_{p,\FF}
=\left(\frac 1N
\sum_{i=0}^{N-1} |\hat x|^p\right)^{1/p}
=\frac1{N^{1/p}}\|\hat x\|_p.
\ee
The factor $1/N$ is
the suitable choice to make the map $x\rightarrow \hat x$ a
bounded map between $L^p$ spaces. This implies that the following estimates hold
(see \cite{Simon-Reed}, Vol. 1,
Theorem IX.8):

\begin{lemma}\label{Hausdorff-Young}
With the norms defined above, we have
\begin{itemize}
\item[(i)] the Parseval identity,
\be\label{parseval.1}
\|x\|_2=\|\hat x\|_{2,\FF},
\ee
and
\item[(ii)] the Hausdorff-Young inequalities:
for $1\leq q\leq 2$ and $p^{-1}+q^{-1}=1$, there exists a finite,
$N$-independent constant  $C_q$ such that
\be\Eq(fourier.4)
\|x\|_p\leq C_q\|\hat x\|_{q,\FF}.
\ee
In particular
\be\Eq(fourier.7)
\|x\|_4\leq C_{4/3} \|\hat x\|_{4/3,\FF}.
\ee
\end{itemize}
\end{lemma}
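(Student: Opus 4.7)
The plan is to prove (i) by a direct computation using the orthogonality of the characters $\{\o^{jk}\}$, and then to deduce (ii) from the Riesz--Thorin interpolation theorem after verifying the two endpoint cases $q=2$ and $q=1$ with constants independent of $N$.

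For the Parseval identity, I would substitute the inverse formula \eqv(fourier.2) into $\|x\|_2^2=\sum_k x_k\overline{x_k}$ and use the orthogonality relation $\sum_{k=0}^{N-1}\o^{(j-l)k}=N\delta_{jl}$ to obtain
\be
\|x\|_2^2 =\frac1{N^2}\sum_{j,l}\hat x_j\overline{\hat x_l}\sum_k \o^{(j-l)k} = \frac1N\sum_j |\hat x_j|^2 = \|\hat x\|_{2,\FF}^2,
\ee
which proves (i) and, incidentally, supplies the $q=p=2$ endpoint of (ii) with constant $1$. The other endpoint needed is $q=1$, $p=\infty$: from \eqv(fourier.2), the triangle inequality and $|\o^{jk}|=1$ give $|x_k(\hat x)|\leq \frac1N\sum_j|\hat x_j|=\|\hat x\|_{1,\FF}$, hence $\|x\|_\infty\leq\|\hat x\|_{1,\FF}$, again with constant $1$.

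To obtain the intermediate exponents, I would view the inverse Fourier map $\hat x \mapsto x$ as a linear operator from $L^q$ of $\{0,\dots,N-1\}$ equipped with the normalized measure assigning mass $1/N$ to each point (so that the resulting $L^q$ norm is $\|\cdot\|_{q,\FF}$) into $L^p$ of the same set with counting measure (so that the $L^p$ norm is $\|\cdot\|_p$). Complex Riesz--Thorin interpolation between the two endpoints above (see \cite{Simon-Reed}, Theorem IX.8) then yields $\|x\|_p\leq C_q\|\hat x\|_{q,\FF}$ for every $1\leq q\leq 2$ with $p^{-1}+q^{-1}=1$, and the case $p=4$, $q=4/3$ specialises to \eqv(fourier.7). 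The only substantive point to keep track of is the choice of normalisations: the factor $N^{-1/p}$ built into $\|\cdot\|_{p,\FF}$ is precisely what keeps both endpoint operator norms equal to $1$ uniformly in $N$, and therefore the Riesz--Thorin constant $C_q$ (bounded by $1^{1-\theta}\cdot 1^{\theta}=1$) depends only on $q$ and not on $N$. There is no real obstacle; the \emph{only} thing that could go wrong is a miscalibration of the two measures on $\{0,\dots,N-1\}$, and the definition of $\|\cdot\|_{q,\FF}$ has been arranged precisely to avoid it.
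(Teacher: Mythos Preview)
Your argument is correct: the direct orthogonality computation for (i) and the Riesz--Thorin interpolation between the $(q,p)=(2,2)$ and $(q,p)=(1,\infty)$ endpoints for (ii) are exactly the standard proof, and your check that the normalised measure on the Fourier side makes both endpoint constants equal to $1$ (hence $C_q\leq 1$ uniformly in $N$) is the right observation. The paper does not actually give a proof of this lemma at all---it simply refers to \cite{Simon-Reed}, Theorem IX.8---so your proposal supplies precisely the argument behind that citation.
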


\bigskip
\noindent Let us introduce the change of variables, defined by the
complex vector $\, z$, as 
\be\label{change.1} 
z= {\hat x \over N}.
\ee 
\noindent  Let us remark that $\ z_0= {1\over N}
\sum_{k=1}^{N-1} x_k \in \mathbb{R}$.  In the variable $z$, the
potential takes the form 
\be\Eq(potential.8) 
\wt G_{\g,N}(z)=G_{\g,N}\left(x(N z)\right) =\frac 1{2}
\sum_{k=0}^{N-1}\l_{k,N} |z_k|^2+\frac 1{4N} \|x(N z)\|^4_4. 
\ee
Moreover, by \eqref{parseval.1} and \eqref{change.1}
\be\label{parseval.2}
\|x(Nz)\|_{2}^2=\|Nz\|_{2,\FF}^2=\frac1N\|Nz\|_{2}^2.
\ee 
In the
new coordinates the minima are now given by 
\be\label{change.2}
I_\pm=\pm (1,0,\dots,0). 
\ee 
In addition,
$z(B^N_-)=z(B_{\r\sqrt{N}}(I_-))=B_{\r}(I_-)$ where the last ball
is in the new coordinates.

\medskip
\noindent  Lemma \thv(Hausdorff-Young) will allow us to prove the
following important
estimates.
For $\d>0$, we set 
\be\Eq(neighborhood.1) 
C_\d=\left\{z\in
\wh\R^N:\,|z_k|\leq \d\frac{r_{k,N}}{\sqrt{|\l_{k,N}|}},\, 0\leq
k\leq N-1 \right\},
\ee
where  $\l_{k,N}$ are the eigenvalues of
the Hessian at $O$ as given in \eqv(eigenvalue.2) and $r_{k,N}$ are 
constants that will be specified below. Using
\eqref{eigenvalue.6}, we have, for $3\leq k\leq N/2$, 
\be
\l_{k,N}\geq k^2\left(1-\frac{\pi^2}{12}\right)\m-1. 
\ee 
Thus
$(\l_{k,N})$ verifies $\l_{k,N}\geq ak^2$, for $1\leq k\leq N/2$,
with some $a$, independent of $N$.

\medskip \noindent
The sequence $(r_{k,N})$ is constructed as follows. Choose an
increasing sequence, $(\r_k)_{k\geq 1}$, and set
\be\label{neighborhood.2}
\begin{cases}
r_{0,N}&=1\\
r_{k,N}&=r_{N-k,N}=\r_k,\quad 1\leq k\leq \left\lfloor\frac N2\right\rfloor.
\end{cases}
\ee
Let, for $p\geq 1$,
\be
K_p=\left(\sum_{k\geq1}\frac{\r_k^p}{k^p}\right)^{1/p}.
\ee
Note that if $K_{p_0}$ is finite then, for all $p_1>p_0$, $K_{p_1}$ is finite.
With this notation we have the following key estimate.

\begin{lemma}\label{norm}
For all $p\geq 2$, there exist finite  constants $B_p$, such that, for
$z\in C_\d$, 
\be\label{norm.1} \|x(Nz)\|^p_p\leq \d^p N B_p 
\ee
if
$K_q$ is finite, with $\frac1p+\frac1q=1$.
\end{lemma}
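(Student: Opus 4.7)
The plan is to reduce the bound to an application of Hausdorff--Young on Fourier space. By the defining relation $z=\hat x/N$ together with \eqref{fourier.2}, the vector $x(Nz)\in\R^N$ has Fourier transform exactly $Nz$, so I would start by identifying $\widehat{x(Nz)}=Nz$. This lets me estimate the $\ell^p$-norm of $x(Nz)$ directly by the $\ell^q$-norm of $z$ on the Fourier side.

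More precisely, I would apply the Hausdorff--Young inequality \eqref{fourier.4} with exponents $p$ and $q$ satisfying $1/p+1/q=1$, obtaining
\be
\|x(Nz)\|_p \le C_q \|\widehat{x(Nz)}\|_{q,\FF}=C_q\|Nz\|_{q,\FF}
 = C_q N^{1-1/q}\|z\|_q = C_q N^{1/p}\|z\|_q,
\ee
where the third equality uses \eqref{fourier.3}. Raising this to the $p$-th power yields $\|x(Nz)\|_p^p\le C_q^p N\,\|z\|_q^p$, so the whole task reduces to estimating $\|z\|_q$ uniformly in $N$ for $z\in C_\d$.

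For this, I use the componentwise bound defining $C_\d$ together with the lower bound $|\l_{k,N}|\ge a k^2$ for $1\leq k\leq N/2$ stated just before the lemma (and $|\l_{0,N}|=1$, $|\l_{k,N}|=|\l_{N-k,N}|$). Splitting the sum at $N/2$ and using $r_{k,N}=\r_{\min(k,N-k)}$ for $k\ge 1$, I get
\be
\|z\|_q^q=\sum_{k=0}^{N-1}|z_k|^q
\le \d^q + 2\d^q\sum_{k=1}^{\lfloor N/2\rfloor}\frac{\r_k^q}{a^{q/2}k^q}
\le \d^q\!\left(1+\tfrac{2}{a^{q/2}}K_q^q\right),
\ee
which is finite and $N$-independent precisely because $K_q<\infty$ by assumption. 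Combining with the previous display produces the desired bound with $B_p=C_q^p\bigl(1+2a^{-q/2}K_q^q\bigr)^{p/q}$.

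The argument is essentially a two-line application of Hausdorff--Young combined with a crude $\ell^q$ estimate of $z$; there is no serious obstacle. The only thing to be careful about is the factor of $N$ coming from the Plancherel-type normalization in \eqref{fourier.3}, which is exactly what is needed to produce the factor of $N$ on the right-hand side of \eqref{norm.1} (so that the bound is proportional to the volume rather than trivial). The role of the hypothesis $K_q<\infty$ is to control the tail of the Fourier weights $\r_k/k$ that appear once the eigenvalues $|\l_{k,N}|$ are bounded below by $ak^2$.
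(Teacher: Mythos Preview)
Your proof is correct and follows essentially the same approach as the paper's: apply Hausdorff--Young to pass to the Fourier side, then bound the resulting $\ell^q$-norm using the eigenvalue lower bound $|\l_{k,N}|\ge ak^2$ and the definition of $K_q$. The only cosmetic difference is that the paper works with $\|Nz\|_{q,\FF}$ throughout while you convert at once to $N^{1/p}\|z\|_q$, but the computations are line-for-line equivalent and yield the same constant $B_p$.
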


\begin{proof}
The Hausdorff-Young inequality (Lemma \ref{Hausdorff-Young}) gives us:
\be\label{norm.2}
\|x(Nz)\|_p\leq C_q\|Nz\|_{q,\FF}.
\ee
Since $z\in C_\d$, we get
\be\label{norm.3}
\|Nz\|^q_{q,\FF}\leq\d^q N^{q-1}\sum_{k=0}^{N-1}\frac{r_{k,N}^q}{\l_k^{q/2}}.
\ee
Then
\be\label{norm.4}
\sum_{k=0}^{N-1}\frac{r_{k,N}^q}{\l_k^{q/2}}
=
\frac1{\l_0^{q/2}}+2\sum_{k=1}^{\lfloor N/2\rfloor}\frac{r_{k,N}^q}{\l_k^{q/2}}
\leq
\frac1{\l_0^{q/2}}+\frac2{a^{q/2}}\sum_{k=1}^{\lfloor N/2\rfloor}\frac{\r_k^q}{k^q}
\leq
\frac1{\l_0^{q/2}}+\frac2{a^{q/2}}K_q^q=D_q^q
\ee
which is finite if  $K_q$ is finite. Therefore,
\be\label{norm.5}
\|x(Nz)\|^p_p\leq \d^pN^{(q-1)\frac pq}C_q^pD_q^p,
\ee
which gives us the result since $(q-1)\frac pq=1$.
\end{proof}

\noindent We have all what we need to estimate the capacity.

\medskip

\subsubsection{Capacity Estimates}

Let us now prove our main theorem.

\begin{proposition}\TH(capacity)
There exists a constant $A$, such that, for all $\e<\e_0$ and for all $N$,
\be\label{cap.4}
\frac{\capa\left(B^N_+,B^N_-)\right)}{N^{N/2-1}}
= \e
\sqrt{2\pi\e}^{N-2} \frac{1}{\sqrt{|\det(\nabla F_{\g,N}(0))|}}
\left(1+ R(\e,N)\right),
\ee
where $|R(\e,N)|\leq A\sqrt{\e |\ln \e|^3}$.
\end{proposition}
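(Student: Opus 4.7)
The plan is to obtain matching upper and lower bounds on $\capa(B^N_+,B^N_-)$ via the Dirichlet variational principle \eqref{cap.1}, with errors controlled uniformly in $N$. Throughout I work in the rescaled Fourier coordinates $z=\hat x/N$ of \eqref{change.1}, in which the saddle sits at $0$, the minima at $\pm e_0=\pm(1,0,\dots,0)$, and the potential becomes
\[
\tilde G_{\g,N}(z)=\frac12\sum_{k=0}^{N-1}\l_{k,N}|z_k|^2+\frac1{4N}\|x(Nz)\|_4^4,
\]
with $\l_{0,N}=-1$ the only negative eigenvalue. On the tube $C_\d$ of \eqref{neighborhood.1}, Lemma \ref{norm} bounds the quartic remainder by $\d^4B_4/4$ uniformly in $N$, so that $\tilde G_{\g,N}$ is a genuine perturbation of a quadratic saddle there.

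For the \emph{upper bound} I will choose a test function
\[
h^\star(z)=f(z_0)\quad\text{on }C_\d,\qquad f(s)=\frac{\int_s^\d e^{-t^2/(2\e)}\,dt}{\int_{-\d}^\d e^{-t^2/(2\e)}\,dt},
\]
extended by $1$ on the $I_-$-side and by $0$ on the $I_+$-side of $C_\d$, with a smooth cutoff across the thin collar whose contribution is negligible because $\tilde G_{\g,N}\geq c\d^2$ there. Substituting $h^\star$ into \eqref{cap.3} and integrating over $z_\perp=(z_1,\dots,z_{N-1})$ yields, after factoring the Gaussian integrals against $e^{-\sum_{k\geq1}\l_{k,N}|z_k|^2/(2\e)}$, the product $\prod_{k\neq 0}\sqrt{2\pi\e/\l_{k,N}}$. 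The one-dimensional integration in $z_0$ delivers the 1D capacity $\sqrt{\e/(2\pi)}\bigl(1+O(e^{-c\d^2/\e})\bigr)$, while the quartic correction from Lemma \ref{norm} contributes a multiplicative factor $\exp(O(\d^4/\e))$. Combining these together with the Jacobian of $x\mapsto Nz$ reproduces the claimed prefactor $N^{N/2-1}\e(\sqrt{2\pi\e})^{N-2}/\sqrt{|\det\nabla^2F_{\g,N}(O)|}$.

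For the \emph{lower bound} I will discard the contribution from $C_\d^c$ (exponentially smaller via a standard tube/large-deviation argument based on the positivity of $\tilde G_{\g,N}-\tilde G_{\g,N}(O)$ off $C_\d$) and use $\|\nabla_z h\|_2^2\geq|\partial_{z_0}h|^2$. For any $h\in\HH$, Fubini followed by the one-dimensional Cauchy--Schwarz inequality gives, at fixed $z_\perp$,
\[
\e\int_{-\d}^{\d} e^{-\tilde G_{\g,N}/\e}|\partial_{z_0}h|^2\,dz_0
\;\geq\;
\frac{\e\,\bigl(h(-\d,z_\perp)-h(\d,z_\perp)\bigr)^2}{\int_{-\d}^{\d} e^{\tilde G_{\g,N}/\e}\,dz_0}.
\]
An a priori argument using that $h$ equals $1$ on $B^N_-$ and $0$ on $B^N_+$, together with standard H\"older/Harnack estimates for harmonic functions, shows that on a $z_\perp$-set of complementary Gaussian weight $o(1)$ the two boundary values can be replaced by $1-o(1)$ and $o(1)$. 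Integration over $z_\perp\in C_\d^\perp$ then produces the same Gaussian product and the same 1D factor as in the upper bound, with a matching $\exp(O(\d^4/\e))$ error.

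The \emph{main obstacle} is maintaining uniformity in $N$ of both perturbative factors. Firstly, the quartic remainder $\|x(Nz)\|_4^4/(4N)$ on $C_\d$ is controlled only by invoking Lemma \ref{norm} with $p=4$, $q=4/3$, which forces the weights $(\r_k)$ to be chosen so that $K_{4/3}<\infty$; this dictates how widely the tube may open in the transverse directions and governs the test function error in both bounds. Secondly, the infinite Gaussian product $\prod_{k\geq 1}\sqrt{2\pi\e/\l_{k,N}}$ has a non-trivial large-$N$ limit precisely because of the bound $\l_{k,N}\geq ak^2$ from \eqref{eigenvalue.6}, and by \eqref{det.2} it reassembles into $1/\sqrt{|\det\nabla^2F_{\g,N}(O)|}$. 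Optimizing the free parameter $\d$ against the combined $\exp(O(\d^4/\e))$ and $O(\sqrt\e/\d)$ errors by choosing $\d\sim\sqrt{\e|\ln\e|}$ yields the announced bound $|R(\e,N)|\leq A\sqrt{\e|\ln\e|^3}$.
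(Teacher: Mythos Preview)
Your upper bound is essentially the paper's argument: the same test function $f(z_0)$ on the tube $C_\d$, the same use of Lemma~\ref{norm} to control the quartic remainder, and the same treatment of the collar $S_\d\setminus C_\d$.

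The lower bound, however, has a genuine gap. You restrict to the small box $C_\d$ around the saddle (so $|z_0|\leq\d$) and then need to know that $h^*(-\d,z_\perp)\approx 1$ and $h^*(\d,z_\perp)\approx 0$ for typical $z_\perp$. You propose to get this from ``standard H\"older/Harnack estimates for harmonic functions''. But those estimates carry constants that depend on the dimension $N$, and it is precisely this $N$-dependence that the whole paper is trying to avoid; see the Remark after \eqref{key.3} and the discussion at the end of Section~\ref{section.sharp}. So as written, your lower bound does not give uniform-in-$N$ control.

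The paper sidesteps this issue by a different choice of domain: instead of the short box $C_\d$, it restricts the Dirichlet form to a \emph{long corridor} $\widehat C_\d=\{z:\,z_0\in(-1+\r,1-\r),\ |z_k|\leq\d r_{k,N}/\sqrt{\l_{k,N}}\}$ that reaches from $B^N_-$ to $B^N_+$. Because the corridor touches both balls, the boundary values $h^*=1$ and $h^*=0$ are available directly at the two ends of the $z_0$-segment, and the one-dimensional minimizer \eqref{lower.7} can be written down with no a~priori information on $h^*$ whatsoever. The price is that $z_0$ is now $O(1)$ rather than $O(\d)$, so the Taylor expansion of the quartic term has a cubic remainder (Lemma~\ref{l-approx}), giving an error $O(\d^3/\e)$; with $\d=\sqrt{K\e|\ln\e|}$ this is exactly the $\sqrt{\e|\ln\e|^3}$ in the statement.

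A related symptom in your write-up: with $\d\sim\sqrt{\e|\ln\e|}$, the error $O(\sqrt\e/\d)$ you list equals $O(|\ln\e|^{-1/2})$, which is much larger than the target $\sqrt{\e|\ln\e|^3}$ and would not give the claimed bound even if the Harnack step were valid.
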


The proof will be decomposed into two lemmata, one for the upper
bound and the other for the lower bound. The proofs are quite
different but follow the same idea. We have to estimate some
integrals. We isolate a neighborhood around the point $O$ of
interest. We get an approximation of the potential on this
neighborhood, we bound the remainder and we estimate the integral
on the suitable neighborhood.

\medskip \noindent
In what follows, constants independent of $N$  are denoted $A_i$.

\newpage

\paragraph{Upper bound.}
The first lemma we prove is the upper bound for Proposition \thv(capacity).
\begin{lemma}\label{upper}
There exists a constant $A_0$ such that for all $\e$ and for all $N$,
\be\label{upper.1}
\frac{\capa\left(B^N_+,B^N_-\right)}{N^{N/2-1}}
\leq \e
\sqrt{2\pi\e}^{N-2} \frac{1}{\sqrt{|\det(\nabla F_{\g,N}(0))|}}
\left(1+ A_0\e|\ln \e|^2\right).
\ee
\end{lemma}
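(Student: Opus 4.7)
The plan is to apply the Dirichlet principle \eqref{cap.1}: it suffices to exhibit an admissible $h^\ast\in\HH$ for which $\Phi(h^\ast)$ matches the right-hand side of \eqref{upper.1}. I would work in the Fourier coordinates $z$ of \eqref{change.1}, in which the Hessian of $\wt G_{\g,N}$ at $O$ is diagonal with the single negative eigenvalue $\l_0=-1$ attached to the real coordinate $z_0$, and $B^N_\pm$ becomes the ball of radius $\r$ around $\pm(1,0,\dots,0)$. Following \cite{bovier04}, I would let $h^\ast$ depend only on $z_0$ and interpolate one-dimensionally: set $h^\ast(z)=f(z_0)$ with
\begin{equation*}
f(t)=\frac{\int_t^{\d}e^{-s^2/(2\e)}\,ds}{\int_{-\d}^{\d}e^{-s^2/(2\e)}\,ds}\qquad\text{for }|t|\le\d,
\end{equation*}
extended by $1$ on $(-\infty,-\d]$ and by $0$ on $[\d,\infty)$, with $\d=\sqrt{c\,\e|\ln\e|}$ for a large enough constant $c$. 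For $\e$ small, $\d<1-\r$ ensures the boundary conditions, so $h^\ast\in\HH$; note that $f$ is the minimizer of the purely quadratic 1D Dirichlet form on $[-\d,\d]$, so $\nabla h^\ast$ is supported in the slab $\{|z_0|\le\d\}$ with only a $z_0$-component.

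Changing variables $x\mapsto z$ produces a Jacobian $dx=N^{N/2}dz$, while the chain rule together with $z_0=N^{-1}\sum_k x_k$ yields $\|\nabla_x h^\ast\|_2^2=|f'(z_0)|^2/N$, so
\begin{equation*}
\Phi(h^\ast)=\e\,N^{N/2-1}\int_{\R^N}e^{-\wt G_{\g,N}(z)/\e}\,|f'(z_0)|^2\,dz.
\end{equation*}
I would split this integration at the tube $C_\d$ of \eqref{neighborhood.1}, taking the sequence $(\r_k)$ so that $K_{4/3}<\infty$, e.g.\ $\r_k=k^{1/8}$. On $C_\d$, Lemma \ref{norm} with $p=4$ bounds the non-quadratic remainder $(4N)^{-1}\|x(Nz)\|_4^4$ by $\d^4 B_4/4$ uniformly in $N$, so its exponential factor is $1+O(\d^4/\e)=1+O(\e|\ln\e|^2)$. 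The resulting purely Gaussian integral factorises: the $z_0$-piece equals $\bigl(\int_{-\d}^{\d}e^{-s^2/(2\e)}ds\bigr)^{-1}$, which is $(2\pi\e)^{-1/2}$ up to exponentially small corrections, while the transverse integrations (one-dimensional real for $k=0,N/2$, two-dimensional for complex conjugate pairs) collapse to $\sqrt{2\pi\e}^{\,N-1}\big/\sqrt{\prod_{k=1}^{N-1}\l_{k,N}}$. Since $\l_0=-1$, one has $\prod_{k=1}^{N-1}\l_{k,N}=|\det\nabla^2F_{\g,N}(O)|$, and assembling the factors reproduces the leading term of \eqref{upper.1}.

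Two residual errors need to be absorbed uniformly in $N$. The tube-tail error, from restricting each transverse Gaussian to $|z_k|\le\d\r_k/\sqrt{\l_{k,N}}$, costs at most $\tfrac12\mathrm{erfc}(\d\r_k/\sqrt{2\e})\le\tfrac12\e^{c\r_k^2/2}$ in each direction $k\ge1$, and the sum over $k$ is bounded by $A\e^2$ once $c$ is taken large enough. The slab-outside-tube contribution on $\{|z_0|\le\d\}\setminus C_\d$ is handled through the pointwise bound $\wt G_{\g,N}\ge -\d^2/2+\l_{k_0,N}|z_{k_0}|^2/2$ whenever some coordinate $k_0\ge1$ exceeds its $C_\d$-bound, yielding a Gaussian-tail suppression of the same order. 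Adding everything gives the $1+A_0\,\e|\ln\e|^2$ correction.

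The main obstacle is precisely this uniformity in $N$, and that is exactly what Lemma \ref{norm} is designed to deliver: it converts pointwise control on the Fourier coordinates into an $N$-independent bound on the only genuinely non-quadratic part of $\wt G_{\g,N}$. The condition $K_{4/3}<\infty$ constrains the admissible growth of $(\r_k)$ and hence the geometry of $C_\d$, while the choice $\d=\sqrt{c\,\e|\ln\e|}$ balances the quartic-remainder error $O(\d^4/\e)$ against the Gaussian-tail error $\e^{c/4}$, producing the stated $O(\e|\ln\e|^2)$ rate.
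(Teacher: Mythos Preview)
Your plan follows the paper's proof closely: choose a test function depending only on $z_0$, split the slab $\{|z_0|\le\d\}$ into the tube $C_\d$ and its complement, use Lemma~\ref{norm} to control the quartic remainder on $C_\d$, and take $\d=\sqrt{c\,\e|\ln\e|}$. The paper does the same (its test function is allowed to be arbitrary with gradient $O(\d^{-1})$ on $S_\d\setminus C_\d$, but your simpler choice $f(z_0)$ throughout works just as well there).

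There is, however, one genuine slip. On the slab-outside-tube region you invoke the bound $\wt G_{\g,N}\ge -\d^2/2+\l_{k_0,N}|z_{k_0}|^2/2$, retaining only the single offending transverse coordinate $z_{k_0}$. With that bound the integral over the remaining $N-2$ transverse coordinates carries no weight at all and diverges, destroying any uniformity in $N$. You must keep the full quadratic lower bound
\[
\wt G_{\g,N}(z)\;\ge\; -\frac{z_0^2}{2}+\frac12\sum_{k=1}^{N-1}\l_{k,N}|z_k|^2,
\]
valid on all of $\wh\R^N$ since the quartic term in \eqref{potential.8} is nonnegative. Then every transverse direction contributes its Gaussian factor, and a union bound over which coordinate $k_0$ escapes $C_\d$ produces the summable tails $\sum_k r_{k,N}^{-1}e^{-\d^2 r_{k,N}^2/(2\e)}$, exactly as in the paper's Lemma~\ref{concentration}. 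Note also that your ``tube-tail error'' is not needed for the \emph{upper} bound: the transverse Gaussian integral over $C_\d^\perp$ is trivially dominated by the full $\R^{N-1}$ Gaussian, so there is nothing to correct in that direction.
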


\begin{proof}
This lemma is proved in \cite{bovier04} in the
finite dimension setting.
We use the
same strategy, but here we take care to
  control
the integrals appearing  uniformly in the dimension.

We will denote the quadratic approximation of $\wt G_{\g,N}$ by $F_0$, i.e.
\be\label{name.1}
F_0(z)=\sum_{k=0}^{N-1}\frac{{\l_{k,N}}|z_k|^2}{2}
=-\frac{z_0^2}{2}+\sum_{k=1}^{N-1}\frac{{\l_{k,N}}|z_k|^2}{2}.
\ee

On $C_\d$, we can control the non-quadratic part through
Lemma \ref{norm}.
\begin{lemma}
\TH(u-approx)
There exists a constant $A_1$ and $\d_0$, such that for all $N$, $\d<\d_0$
and all $z\in C_\d$,
\be\Eq(u-approx.1)
\Big|\wt G_{\g,N}(z)-F_0(z)\Big|\leq A_1\d^4.
\ee
\end{lemma}

\begin{proof}
Using \eqref{potential.8}, we see that
\be\label{potential.9}
\wt G_{\g,N}(z)-F_0(z)=\frac 1{4N} \|x(N z)\|^4_4.
\ee
We choose a sequence $(\r_k)_{k\geq1}$  such that $K_{4/3}$
is finite.

Thus, it follows from Lemma \ref{norm}, with $A_1=\frac 14 B_4$, that
\be\Eq(u-approx.7) 
\left|\wt G_{\g,N}(z)  -  \frac 1{2}
\sum_{k=0}^{N-1} \l_{k,N} |z_k|^2\right|\leq A_1\d^4, 
\ee as
desired.
\end{proof}

\noindent We  obtain the upper bound of Lemma \ref{upper} by
choosing a test function $h^+$. We change coordinates from $x$ to
$z$ as explained in \eqref{change.1}. A simple calculation shows
that 
\be\Eq(gradient.1)
\|\nabla h(x)\|_2^2=N^{-1} \|\nabla \tilde
h(z)\|_2^2, 
\ee 
where $\tilde h(z)=h(x(z))$ under our coordinate change.

For $\d$ sufficiently small, we can ensure that, for $z\not\in C_{\d}$ with $|z_0|\leq \d$,
\be
\wt G_{\g,N}(z)
\geq F_0(z)
=-\frac{z_0^2}2+
\frac12\sum_{k=1}^{N-1}\l_{k,N}|z_k|^2
\geq -\frac{\d^2}2+2\d^2
\geq\d^2.
\ee
Therefore, the strip 
\be\Eq(green.aa)
S_{\d}\equiv\{x|\,x=x(Nz),\,|z_0|<\d\}
\ee
 separates $\R^N$
into two disjoint sets, one containing $I_-$ and the other one
containing $I_+$, and for $x\in S_d\setminus C_\d$, $G_{\g,N}(x)\geq \d^2$.

The complement of $S_{\d}$ consists of two connected
components $\G_+,\G_-$  which contain $I_+$ and $I_-$,
respectively. We define 
\be\label{upper.3} \tilde h^+(z)=
\begin{cases}
1&\text{for}\,z\in\G_-\\
0&\text{for}\,z\in\G_+\\
f(z_0)&\text{for}\,z\in C_{\d}\\
\text{arbitrary}&\text{on}\,S_{\d}\setminus C_{\d}\,\text{
but}\,\norm{\nabla h^+}_2\leq \frac{c}{\d}.
\end{cases},
\ee 
where $f$ satisfies $f(\d)=0$ and $f(-\d)=1$ and will be
specified later.

Taking into account the change of coordinates,
the Dirichlet form \eqref{cap.3} evaluated
on $h^+$ provides  the upper bound
\bea\label{upper.5}
\Phi(h^+)
&=&
N^{N/2-1}\e\int_{z((B^N_-\cup B^N_+)^c)}
e^{-\wt G_{\g,N}(z)/\e}\norm{\nabla \tilde h^+(z)}_2^2dz
\\\nonumber
&\leq&
N^{N/2-1}\left[\e\int_{C_{\d}}e^{-\wt G_{\g,N}(z)/\e}
\left(f'(z_0)\right)^2dz+
\e\d^{-2}c^2\int_{S_{\d}\setminus C_\d}
e^{-\wt G_{\g,N}(z)/\e}dz\right].
\eea

The first term will give the dominant contribution. Let us focus on it first.
We replace $\wt G_{\g,N}$ by $F_0$, using the bound
\eqv(u-approx.1), and for suitably  chosen $\d$, we obtain
\bea\label{upper.6}\nonumber
\int_{C_{\d}}e^{-\wt G_{\g,N}(z)/\e} \left(f'(z_0)\right)^2dz
&\leq&
\left(1+2A_1\frac{\d^4}{\e}\right)\int_{C_{\d}}e^{-
F_0(z)/\e} \left(f'(z_0)\right)^2dz
\\\nonumber
&=&\left(1+2A_1\frac{\d^4}{\e}\right)
\int_{D_{\d}}e^{-\frac 1{2\e}\sum_{k=1}^{N-1}\l_{k,N}
|z_k|^2}dz_1\dots dz_{N-1}
\\&&\quad\times
\int_{-\d}^{\d}\left(f'(z_0)\right)^2e^{z_0^2/2\e}dz_0.
\eea
Here we have used that we can write $C_\d$ in the form $[-\d,\d]\times D_\d$.
As we
want to calculate an infimum, we choose a function $f$ which
minimizes the integral $\int_{-\d}^{\d}\left(f'(z_0)\right)^2e^{z_0^2/2\e}dz_0$.
A simple
computation leads to the choice
\be\label{upper.4} 
f(z_0)=\frac{\int_{z_0}^{\d} e^{-t^2/2\e}dt}
{\int_{-\d}^{\d}e^{-t^2/2\e}dt}.
\ee
Therefore
\be\Eq(tralala)
\int_{C_{\d}}e^{-\wt G_{\g,N}(z)/\e} \left(f'(z_0)\right)^2dz \leq
\frac{\int_{C_{\d}}e^{-\frac 1{2\e}\sum_{k=0}^{N-1}|\l_{k,N}|
|z_k|^2}dz} {\Big(\int_{-\d}^{\d} e^{-\frac 1{2\e} z_0^2
}dz_0\Big)^2}\left(1+2A_1\frac{\d^4}{\e}\right).
\ee
Choosing
$\d=\sqrt{K \e|\ln \e |}$, a simple calculation shows that there
exists $A_2$ such that
\be\label{upper.7}
\frac{\int_{C_{\d}}e^{-\frac 1{2\e}\sum_{k=0}^{N-1}|\l_{k,N}|
|z_k|^2}dz} {\Big(\int_{-\d}^{\d} e^{-\frac 12 z_0^2
/\e}dz_0\Big)^2} \leq \sqrt{2\pi\e}^{N-2}
\frac{1}{\sqrt{|\det(\nabla F_{\g,N}(0))|}}(1+A_2\e).
\ee

The second term in \eqref{upper.5} is bounded above in the
following lemma.

\begin{lemma}\label{concentration}
For $\d=\sqrt{K\e|\ln(\e)|}$ and $\r_k=4k^{\a}$, with $0<\a<1/4$,
there exists $A_3<\infty$, such that for all $ N$ and $0<\e<1$,
\be\Eq(concentration.1)
\int_{S_{\d}\setminus C_\d}e^{-\wt G_{\g,N}(z)/\e}dz\leq
\frac{A_3 \sqrt{2\pi \e}^{N-2}}
{\sqrt{\abs{\det\left(\nabla^2F_{\g, N}(O)\right)}}}\e^{3K/2+1}.
\ee
\end{lemma}

\begin{proof}
Clearly, by \eqv(potential.8), 
\be\label{concentration.3}
\wt G_{\g,N}(z)\geq -\frac{z_0^2}{2}+\frac 1{2} \sum_{k=1}^{N-1}
\l_{k,N} |z_k|^2.
\ee
Thus
\bea\label{concentration.5}
\nonumber
&&\int_{S_{\d}\setminus C_\d}e^{-\wt G_{\g,N}(z)/\e}dz \\\nonumber
&\leq&
\int_{-\d}^{\d} dz_0 \int_{\exists_{k=1}^{N-1}: |z_k|\geq \d r_{k,N}/
\sqrt{\l_{k,N}}}dz_1\dots dz_{N-1}
e^{-\frac 1{2\e}\sum_{k=0}^{N-1}\l_{k,N} |z_k|^2}
\\\nonumber
&\leq&
\int_{-\d}^{\d}e^{+z_0^2/2\e}dz_0
\sum_{k=1}^{N-1}\int_{|z_k|\geq  \d r_{k,N}/\sqrt{\l_{k,N}}}
e^{-\l_{k,N} |z_k|^2/2\e}dz_k
\\\nonumber&&\quad\quad\times
\prod_{1\leq i\neq k\leq N-1}
\int_{\R}e^{-\l_{i,N} |z_i|^2/2\e}dz_i
\\ 
&\leq& 2 e^{\d^2/2\e} 
\sqrt{\frac {2\e} {\pi}}  \sqrt{\prod_{i=1}^{N-1}2\pi\e \l_{i,N}^{-1}}
\sum_{k=1}^{N-1} r_{k,n}^{-1} e^{-\d^2r_{k,N}^2/2\e}.
\eea
Now,
\bea\label{concentration.8}\nonumber
\sum_{k=1}^{N-1}{r_{k,N}^{-1}}e^{-\d^2 r_{k,N}^2/2\e}
&=&
\sum_{k=1}^{\lfloor\frac N2\rfloor}{r_{k,N}^{-1}}e^{-\d^2 r_{k,N}^2/2\e}
+\sum_{k=\lfloor\frac N2\rfloor+1}^{N-1}{r_{N-k,N}^{-1}}e^{-\d^2 r_{N-k,N}^2/2\e}\\
&\leq&
2\sum_{k=1}^{\infty}\r_k^{-1}e^{-\d^2 \r_k^2/2\e}.
\eea
We choose $\r_k=4 k^\a$ with $0<\a<1/4$
to ensure that $K_{4/3}$ is finite. With our choice for $\d$, the sum in 
\eqv(concentration.8) is then given by 
\be\label{concentration.9}
\frac 14\sum_{n=1}^{\infty} n^{-\a} \e^{8K  n^{2\a}}
\leq \frac 14\e^{2K} \sum_{n=1}^{\infty}\e^{6K n^{2\a}}\leq C \e^{2K},
\ee
since the sum over $n$ is clearly convergent.
Putting all the parts together, we get that
\be\label{concentration.12}
\int_{S_{\d}\setminus C_\d}e^{-\wt G_{\g,N}(z)/\e}dz\leq
C\e^{3K/2+1} \sqrt{2 \pi \e}^{N-2}\frac{1}
{\sqrt{\abs{\det\left(\nabla^2F_{\g,N}(O)\right)}}} 
\ee
 and Lemma \ref{concentration} is proven.
\end{proof}

\bigskip
Finally, using \eqref{tralala}, \eqv(upper.5),  \eqref{upper.7}, and
\eqref{concentration.1}, we obtain the  upper bound
\be\label{upper.8} 
\frac{\Phi(h^+)}{N^{N/2-1}} \leq
 \frac{\e\sqrt{2\pi\e}^{N-2}}{\sqrt{|\det(\nabla F_{\g,N}(0))|}}
(1+A_2\e)\left(1+2A_1\e|\ln\e|^2 + A_3'\e^{3K/2}\right) 
\ee
with the choice $\r_k=4k^\a$, $0<\a<1/4$ and
$\d=\sqrt{K\e|\ln\e|}$. Note that all constants are independent of $N$.
Thus Lemma \ref{upper} is proven.
\end{proof}

\bigskip

\paragraph{Lower Bound}
The idea here (as already used in \cite{BBI08}) is to get a
lower bound by restricting the state space to a narrow corridor
from $I_-$ to $I_+$ that contains the relevant paths and along
which the potential is well controlled.
We will prove the following lemma.

\begin{lemma}\label{lower}
There exists a constant $A_4<\infty$ such that for all $\e$ and for all $N$,
\be\label{lower.1}
\frac{\capa\left(B^N_+,B^N_-\right)}{N^{N/2-1}}
\geq \e
\sqrt{2\pi\e}^{N-2} \frac{1}{\sqrt{|\det(\nabla F_{\g,N}(0))|}}
\left(1- A_4 \sqrt{\e |\ln \e|^3}\right).
\ee
\end{lemma}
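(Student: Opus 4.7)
The plan is to invoke the Dirichlet principle in the opposite direction: since $\capa(B_+^N, B_-^N) = \inf_{h\in\HH}\Phi(h)$, it suffices to produce a lower bound on $\Phi(h)$, valid for every admissible $h\in\HH$, that matches the upper bound of Lemma \ref{upper}. Working in the Fourier coordinates $z$ of \eqref{change.1}, I would construct a tube along the unstable $z_0$-direction,
\[
T = \left\{ z\in \wh\R^N : |z_0|\leq \d_0,\ |z_k|\leq \d\, r_{k,N}/\sqrt{\l_{k,N}}\text{ for } 1\leq k\leq N-1 \right\},
\]
with $\d = \sqrt{K\e|\ln\e|}$, weights $r_{k,N}$ built from $\r_k = 4k^\a$, $0<\a<1/4$, as in the upper bound, and $\d_0$ a fixed constant close to $1$ (for instance $\d_0 = 1-\r/2$). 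Parseval's identity together with the Hausdorff-Young inequality (Lemma \ref{Hausdorff-Young}) yields, uniformly in $N$, that the caps $\{z_0=\pm\d_0\}\cap T$ lie inside $B^N_\pm$ for $\e$ sufficiently small; hence any admissible $h$ takes the values $0$ and $1$ on the two caps respectively.

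Next, I would restrict the Dirichlet integral to $T$ and keep only the $\del_{z_0}$-component of the gradient, giving
\[
\Phi(h) \ge \e N^{N/2-1}\int_D dz_\perp \int_{-\d_0}^{\d_0} e^{-\wt G_{\g,N}(z)/\e}\,(\del_{z_0}h)^2\, dz_0,
\]
where $D$ is the $z_\perp$-cross-section of $T$. For each fixed $z_\perp\in D$, Cauchy-Schwarz together with $|h(\d_0,z_\perp)-h(-\d_0,z_\perp)|=1$ gives
\[
\int_{-\d_0}^{\d_0} e^{-\wt G_{\g,N}/\e}\,(\del_{z_0}h)^2\, dz_0 \,\ge\, \left(\int_{-\d_0}^{\d_0} e^{\wt G_{\g,N}(z_0,z_\perp)/\e}\,dz_0\right)^{-1},
\]
i.e.\ the reciprocal of the one-dimensional Laplace integral along the unstable direction.

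The evaluation then proceeds in parallel with the upper bound. On the inner segment $|z_0|\le\d$, Lemma \ref{u-approx} gives $\wt G_{\g,N}\le F_0 + A_1\d^4$, so the $z_0$-integral is bounded above by $e^{A_1\d^4/\e}\int_{-\d}^{\d} e^{-z_0^2/(2\e)}dz_0 \cdot e^{\frac1{2\e}\sum_{k\ge 1}\l_{k,N}|z_k|^2}$. On the outer segment $\d\le|z_0|\le\d_0$ the potential has already dropped by at least $\d^2/2 - O(\d^4)$ below its saddle value, so that portion is suppressed by a factor $\e^{K/2}$ and will be negligible for $K$ large. Integrating the transverse Gaussian $e^{-\frac1{2\e}\sum_{k\ge 1}\l_{k,N}|z_k|^2}$ over $z_\perp\in D$ and extending to $\R^{N-1}$, with tails controlled exactly as in Lemma \ref{concentration}, yields $\sqrt{2\pi\e}^{N-1}/\sqrt{\prod_{k\ge 1}\l_{k,N}}$. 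Multiplying through by the remaining $\e\int_{-\d}^{\d} e^{-z_0^2/(2\e)}dz_0 = \e\sqrt{2\pi\e}(1+o(1))$ and using $|\det\nabla^2 F_{\g,N}(O)|=\prod_{k\ge 1}\l_{k,N}$ produces \eqref{lower.1}; the error $A_4\sqrt{\e|\ln\e|^3}$ arises from combining $\d^4/\e = K^2\e|\ln\e|^2$, the Gaussian tails, and the outer-segment correction.

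The main obstacle will be the simultaneous calibration of $\d$ and of the transverse widths $\d r_{k,N}/\sqrt{\l_{k,N}}$: the tube must be narrow enough in the $z_\perp$-directions that Lemma \ref{u-approx} applies throughout (via Lemma \ref{norm}, this forces $K_{4/3}<\infty$, hence $\a<1/4$), and simultaneously wide enough that its cross-section $D$ captures all but an exponentially small fraction of the product Gaussian uniformly in $N$. The same choice $\r_k = 4k^\a$ that worked in Lemma \ref{concentration} threads this needle, and verifying that both constraints can indeed be met with $N$-independent constants is the main technical content of the proof.
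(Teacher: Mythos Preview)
Your approach is essentially the paper's: restrict the Dirichlet form to a narrow corridor along the unstable $z_0$-direction, drop all gradient components except $\partial_{z_0}$, reduce to a one-dimensional problem (the paper computes the explicit minimizer, your Cauchy--Schwarz argument is equivalent), and then integrate the resulting transverse Gaussian over $z_\perp$ with tails controlled exactly as in Lemma~\ref{concentration}. Your handling of the caps is in fact slightly more careful than the paper's.

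The one substantive difference is how the $z_0$-integral is estimated. Instead of your inner/outer split, the paper proves a refined expansion valid on the entire tube (Lemma~\ref{l-approx}),
\[
\wt G_{\g,N}(z)=-\tfrac12 z_0^2+\tfrac14 z_0^4+\tfrac12\sum_{k\geq1}\l_{k,N}|z_k|^2+z_0^2 f(z_\perp)+O(\d^3),\qquad f(z_\perp)=\tfrac32\sum_{k\geq1}|z_k|^2,
\]
and then evaluates $\int e^{\wt G/\e}dz_0$ directly by Laplace's method; the $O(\d^3/\e)=O(\sqrt{\e|\ln\e|^3})$ error comes from there. Your split would actually give the sharper $O(\d^4/\e)$, but your outer-segment claim (``the potential has already dropped'') is not a consequence of Lemma~\ref{u-approx}, which only applies for $|z_0|\leq\d$. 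You need a short extra argument, e.g.\ monotonicity of $z_0\mapsto\wt G(z_0,z_\perp)$ on $[\d,\d_0]$, obtained by writing $\partial_{z_0}\wt G=-z_0+z_0^3+3z_0\|z_\perp\|_2^2+N^{-1}\sum_i u_i^3$ and bounding the last two terms via Lemma~\ref{norm}; this is the same Hausdorff--Young input that underlies Lemma~\ref{l-approx}. Finally, in your last paragraph the factor should be $\e/\int_{-\d}^\d e^{-z_0^2/(2\e)}dz_0\approx\sqrt{\e/(2\pi)}$ (you divide by the $z_0$-integral, not multiply), which combined with the transverse $\sqrt{2\pi\e}^{N-1}/\sqrt{\prod_{k\geq1}\l_{k,N}}$ gives \eqref{lower.1}.
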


\begin{proof} Given a sequence, $(\r_k)_{k\geq 1}$, with $r_{k,N}$
defined as in \eqref{neighborhood.2}, we set
\be\label{lower.2}
\widehat{C}_{\d}
=
\left\{ z_0\in ]-1+\r,1-\r[,
|z_k|\leq \d\ r_{k,N}/\sqrt {\l_{k,N}}\right\}.
\ee
The restriction $|z_0|<1-\r$ is made to ensure
that $\widehat{C}_{\d}$ is disjoint from $B_\pm$
since in the new coordinates \eqref{change.1}
$I_\pm=\pm (1,0,\dots,0)$.

Clearly, if $h^*$ is the minimizer of the Dirichlet form, then
\be\label{lower.4}
\capa\left(B^N_-,B^N_+\right)=\Phi(h^*)
\geq\Phi_{\widehat{C}_{\d}}(h^*),
\ee
where $\Phi_{\widehat C_\d}$ is the Dirichlet
form for the process on $\widehat C_\d$,
\be\Eq(lower.5)
\Phi_{\widehat C_\d}(h)=\e\int_{\widehat{C}_{\d}}
e^{-G_{\g,N}(x)/\e}\|\nabla h(x)\|_2^2dx=N^{N/2-1}\e\int_{z(\widehat{C}_{\d})}
e^{-\wt G_{\g,N}(z)/\e}\|\nabla \tilde h(z)\|_2^2dz.
\ee
To get our lower bound we now use simply that 
\be
\|\nabla \tilde h(z)\|_2^2
=\sum_{k=0}^{N-1}\left|\frac{\partial \tilde h^*}{\partial z_k}\right|^2
\geq \left|\frac{\partial \tilde h^*}{\partial z_0}\right|^2,
\ee
so that
\be\label{lower.6}
\frac{\Phi(h^*)}{N^{N/2-1}}
\geq\e\int_{z(\widehat{C}_{\d})}
e^{-\wt G_{\g,N}(z)/\e}\Big|\frac{\partial \tilde h^*}
{\partial z_0}(z)\Big|^2dz
=\wt\Phi_{\widehat C_\d}(\tilde h^*)
\geq \min_{ h\in\HH}\wt\Phi_{\widehat C_\d}(\tilde h).
\ee
The remaining variational problem involves only functions depending on 
the single coordinate $z_0$, with the other coordinates, 
$z_\bot=(z_i)_{1\leq i\leq N-1}$, appearing only as parameters. 
The corresponding minimizer is readily found explicitly as
\be\label{lower.7}
\tilde h^-(z_0,z_\bot)=\frac{\int_{z_0}^{1-\r}e^{\wt G_{\g,N}(s,z_{\bot})/\e}ds}
{\int_{-1+\r}^{1-\r}e^{\wt G_{\g,N}(s,z_{\bot})/\e}ds}
\ee
and hence the capacity is bounded from below by
\be\label{lower.8}
\frac{\capa\left(B^N_-,B^N_+\right)}{N^{N/2-1}}
\geq \wt\Phi_{\wh C_\d}(\tilde h^-)
=\e\int_{\widehat{C}^{\bot}_{\d}}
\Big(\int_{-1+\r}^{1-\r}e^{\wt G_{\g,N}(z_0,z_{\bot})/\e}dz_0\Big)^{-1}dz_{\bot}.
\ee
Next, we have to evaluate the integrals in the r.h.s.  above.
The next lemma provides a suitable 
approximation of the potential on $\wh C_\d$.
Note that since  $z_0$ is no longer small, we only expand  in the
coordinates $z_\bot$.

\begin{lemma}\label{l-approx}
Let  $r_{k,N}$ be chosen as before with $\r_k=4k^\a$, $0<\a<1/4$. Then 
there exists a constant, $A_5$, and $\d_0>0$, such that, 
for all $N$ and $\d<\d_0$,
on $\wh C_\d$,
\be
\Eq(l-approx.1)
\left|\wt G_{\g,N}(z)-\left( -\frac 12z_0^2 + \frac 14 z_0^4
+\frac 1{2} \sum_{k=1}^{N-1}\l_{k,N} |z_k|^2
+z_0^2 f(z_\bot)\right)\right|
\leq A_5\d^3,
\ee
where 
\be\Eq(1-approx.1.1)
f(z_\bot)\equiv \frac 32\sum_{k=1}^{N-1}|z_k|^2.
\ee
\end{lemma}

\begin{proof}
We analyze  the non-quadratic part of the potential on $\wh C_\d$,
using \eqref{potential.8} and \eqref{fourier.2}
\be\label{l-approx.3} \frac{1}{N}\|x(Nz)\|_4^4
=\frac{1}{N}\sum_{i=0}^{N-1}|x_i(Nz)|^4
=\frac{1}{N}\sum_{i=0}^{N-1}\left|z_0+\sum_{k=1}^{N-1}\o^{ik}z_k\right|^4
=\frac{z_0^4}{N}\ \sum_{i=0}^{N-1}|1+u_i|^4 
\ee 
where $u_i=\frac
1{z_0}\sum_{k=1}^{N-1}\o^{ik}z_k$. Note that
$\sum_{i=0}^{N-1}u_i=0$ and
$u=\frac1{z_0}x\left(N(0,z_\bot)\right)$  and that for $z\in \wh\R^N$, 
$u_i$ is real. 
Thus
\be\label{l-approx.4}
\sum_{i=0}^{N-1}|1+u_i|^4=N+\sum_{i=0}^{N-1}\left(6u_i^2+4u_i^3 +u_i^4\right) ,
\ee
we get that
\be\label{l-approx.5}
\bigg|\frac{1}{N}\|x(Nz)\|_4^4
-z_0^4\Big(1+\frac6N \sum_{i=0}^{N-1}u_i^2\Big)
\bigg|
\leq
\frac {z_0^4}N\left(4\|u\|_3^3+\|u\|_4^4\right).
\ee
A simple computation shows that
\be\label{l-approx.6}
\frac6N \sum_i u_i^2=\frac 6{z_0^2}\sum_{k\neq0}
|z_k|^2.
\ee
Thus as $|z_0|\leq1$, we see that
\be\label{l-approx.7}
\left|\frac{1}{N}\|x(Nz)\|_4^4-z_0^4-6z_0^2\sum_{k\neq0}|z_k|^2\right|
\leq
\frac 1N\big(4\|x(N(0,z_\bot))\|_3^3+\|x(N(0,z_\bot))\|_4^4\big).
\ee
Using again   Lemma \ref{norm}, we get 
\bea\label{l-approx.8}
\|x(N(0,z_\bot))\|_3^3
&\leq&B_3N\d^3
\\\nonumber\|x(N(0,z_\bot))\|_4^4
&\leq&B_4N\d^4.
\eea
Therefore, Lemma \ref{l-approx} is proved, with $A_5=4 B_3+B_4\d_0$.
\end{proof}

We use Lemma \ref{l-approx} to obtain the upper bound
\bea\label{lower.9}
\int_{-1+\r}^{1-\r}e^{\wt G_{\g,N}(z_0,z_{\bot})/\e}dz_0\leq
&\exp\left(\frac{1}{2\e}\sum_{k\neq0}\l_{k,N}|z_k|^2+\frac{A_5\d^3}{\e}\right)g(z_\bot),
\eea
where
\be\label{lower.10}
g(z_\bot)
=
\int_{-1+\r}^{1-\r}\exp\left(-\e^{-1}\left(\frac{1}{2}z_0^2
-\frac{1}{4}z_0^4-z_0^2f(z_\bot)\right)\right)dz_0.
\ee
This integral is readily estimate via Laplace's method as
\be\Eq(lower.11)
g(z_\bot)=\frac{\sqrt{2\pi\e}}{\sqrt{1-2f(z_\bot)}}
\left(1+O(\e)\right)
= \sqrt{2\pi\e} \left(1+O(\e)+O(\d^2)\right). 
\ee
Inserting this estimate into \eqv(lower.8), 
it remains to carry out the integrals over the vertical coordinates which 
yields 
\bea \Eq(lower.12)\nonumber
\wt\Phi_{\widehat C_\d}(\tilde h^-)&\geq& \e
\int_{\widehat C_\d^\bot}
\exp\left(-\frac{1}{2\e}\sum_{k=1}^{N-1}\l_{k,N}|z_k|^2-\frac{A_5\d^3}{\e}\right)
 \frac1{\sqrt{2\pi\e}} \left(1+O(\e)+O(\d^2)\right)dz_\bot\\\nonumber
&=&
 \sqrt{\frac  \e{2\pi}}
\int_{\widehat C_\d^\bot}
\exp\left(-\frac{1}{2\e}\sum_{k=1}^{N-1}\l_{k,N}|z_k|^2\right)dz_\bot
 \left(1+O(\e)+O(\d^2)+O(\d^3/\e)\right).\\
\eea
The integral is readily bounded by
\bea\Eq(lower.13)\nonumber
&&\int_{\widehat C_\d^\bot}
\exp\left(-\frac{1}{2\e}\sum_{k=1}^{N-1}\l_{k,N}|z_k|^2\right)dz_\bot
\geq \int_{\R^{N-1}}
\exp\left(-\frac{1}{2\e}\sum_{k=1}^{N-1}\l_{k,N}|z_k|^2\right)dz_\bot
\\\nonumber
&&-\sum_{k=1}^{N-1} 
\int_\R dz_1\dots\int_{|z_k|\geq \d r_{k,N}/\sqrt{\l_{k,N}}}\dots
\int_{\R}dz_{N-1}
\exp\left(-\frac{1}{2\e}\sum_{k=1}^{N-1}\l_{k,N}|z_k|^2\right)
\\\nonumber
&&\geq  \sqrt{2\pi \e}^{N-1}
\prod_{i=1}^{N-1} \sqrt {\l_{i,N}}^{-1}\left(1-
\sqrt{\frac {2\e}\pi}\d^{-1}\sum_{k=1}^{N-1}r_{k,N}^{-1}e^{-\d^2 r_{k,N}^2/2\e}
\right)\\
&&
=  \sqrt{2\pi \e}^{N-1} \frac 1{\sqrt{\abs{\det F_{\g,N}(O)}}}
\left(1+O(\e^K)\right),
\eea
when $\d=\sqrt{K\e\ln \e}$ and $O(\e^K)$ uniform in $N$.
Putting all estimates together, we arrive at the assertion of 
Lemma \thv(lower).

\end{proof}

\subsection{Uniform estimate of the mass of the equilibrium potential}

We will prove the following proposition.
\begin{proposition}\label{numerator}
There exists a constant $A_6$ such that, for all
$\e<\e_0$ and  all $N$,
\be\Eq(numerator.1)             
\frac{1}{N^{N/2}}\int_{{B^N_+}^c}h^*_{B^N_-,B^N_+}(x)e^{- G_{\g,N}(x)/\e}dx
=\frac{\sqrt{2\pi\e}^{N}\exp\left(\frac1{4\e}\right)}{\sqrt{\det(\nabla F_{\g,N}(I_-))}}\left(1+ R(N,\e)\right),
\ee
where 
$
|R(N,\e)|\leq A_6 \sqrt{\e|\ln \e|^3}
$.
\end{proposition}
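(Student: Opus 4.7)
The plan is to localize the integral to a neighborhood of the minimum $I_-$, where the equilibrium potential $h^*_{B^N_-, B^N_+}$ is essentially equal to $1$, and then to compute the leading contribution by a Laplace-type calculation. The technical machinery carries over from the proof of Proposition \ref{capacity}: the Fourier change of variables $z=\hat x/N$, the Hausdorff--Young estimates of Lemma \ref{Hausdorff-Young}, and the uniform bound of Lemma \ref{norm}.

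After shifting coordinates to $w = z - I_-^z$, where $I_-^z=(-1,0,\dots,0)$ represents $I_-$ in the $z$-coordinates, so that $I_-$ sits at the origin, the expansion of the single-well polynomial $\tfrac14 x_i^4-\tfrac12 x_i^2$ around $x_i=-1$ reads $-\tfrac14+y_i^2-y_i^3+\tfrac14 y_i^4$ with $y_i=x_i+1$. By Parseval the quadratic part becomes $\tfrac12\sum_k\nu_{k,N}|w_k|^2$, where $\nu_{k,N}=\l_{k,N}+3\geq 2$ are the Fourier eigenvalues of $\nabla^2 F_{\g,N}(I_-) = \nabla^2 F_{\g,N}(O)+3\,\Id$. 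I would define a good neighborhood $\mathcal N_-^z$ exactly as $C_\d$ in \eqref{neighborhood.1}, but with $\nu_{k,N}$ in place of $|\l_{k,N}|$, retaining the choices $\rho_k=4k^\a$ with $0<\a<1/4$ and $\d=\sqrt{K\e|\ln\e|}$. Then Lemma \ref{norm}, applied with $p=3$ and $p=4$, bounds the cubic-plus-quartic remainder by $A\d^3$ uniformly in $N$. A Gaussian integration together with a tail estimate analogous to Lemma \ref{concentration} would yield
\[
\frac{1}{N^{N/2}}\int_{\mathcal N_-} e^{-G_{\g,N}/\e}\,dx = \frac{(2\pi\e)^{N/2}\,e^{1/(4\e)}}{\sqrt{\det\nabla^2 F_{\g,N}(I_-)}}\big(1+O(\sqrt{\e|\ln\e|^3})\big),
\]
which matches the asserted leading order in \eqref{numerator.1}.

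It remains to show (i) that $h^*_{B^N_-,B^N_+}(x)=1+O(e^{-c/\e})$ uniformly for $x\in\mathcal N_-$ and $N$, so that replacing $h^*$ by $1$ on $\mathcal N_-$ is legitimate, and (ii) that $\int_{(B^N_+\cup\mathcal N_-)^c}h^*\,e^{-G_{\g,N}/\e}\,dx$ is negligible compared to the main term. For (i), by reversibility one has $1-h^*(x)=\P_x[\t_{B^N_+}<\t_{B^N_-}]$, and I would bound this through the integral representation \eqref{aa.1} and the uniform capacity estimate of Proposition \ref{capacity}; reaching $B^N_+$ from $\mathcal N_-$ requires crossing the saddle $O$ of barrier height $1/4$, so this probability is exponentially small with rate independent of $N$. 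For (ii), I would split $(B^N_+\cup\mathcal N_-)^c$ into a neighborhood $\mathcal N_+$ of $I_+$, where the symmetric argument yields $h^*=O(e^{-c/\e})$ against a Gaussian integral of the same order as the main term, and its complement, on which $G_{\g,N}\geq -1/4+c'$ uniformly, so that $e^{-G/\e}$ carries an extra factor $e^{-c'/\e}$; integrability at infinity is ensured by the quartic growth of $F_{\g,N}$.

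The principal obstacle is item (i): proving $1-h^*(x)=O(e^{-c/\e})$ \emph{uniformly in $N$} for $x\in\mathcal N_-$. The H\"older and Harnack arguments used in \cite{bovier04} for fixed $N$ introduce dimension-dependent constants; I would instead rely on the uniform capacity estimate of Proposition \ref{capacity} combined with a Green-function/renewal estimate, which is the potential-theoretic counterpart of the Dirichlet-principle argument employed for the capacity lower bound in Lemma \ref{lower}. Everything else is routine Laplace asymptotics once the dimension-uniform Fourier bounds of Section 4.1 are in place.
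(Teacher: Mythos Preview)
Your Laplace computation near $I_-$ and the use of Lemma~\ref{norm} are correct, but both of your ``remaining'' steps (i) and (ii) are handled quite differently in the paper, and your versions have real gaps.

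For (i), the paper does not estimate $1-h^*$ at all. It simply observes that, with $\d=\sqrt{K\e|\ln\e|}$, the good neighborhood $C_\d(I_-)$ satisfies $\|z-z(I_-)\|_2^2\leq \d^2\sum_k r_{k,N}^2/\nu_{k,N}\leq A_8K_2^2\,\d^2$, hence $C_\d(I_-)\subset z(B_-^N)$ once $\e$ is small. On $B_-^N$ the equilibrium potential equals $1$ by its boundary condition, so $h^*\equiv 1$ on $C_\d(I_-)$ exactly. Your proposed route via Green functions and uniform Harnack-type bounds is unnecessary and, as you yourself note, hard to make uniform in $N$.

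For (ii), your claim that $G_{\g,N}\geq -\tfrac14+c'$ uniformly on $(B_+^N\cup\mathcal N_-\cup\mathcal N_+)^c$ is not correct: a point just outside $\mathcal N_-$ in a single transverse direction $z_k$ has $G_{\g,N}$ arbitrarily close to $-\tfrac14$, so no uniform gap exists. The paper instead proves a \emph{global} pointwise lower bound: by Cauchy--Schwarz $z_0^4\leq N^{-1}\|x\|_4^4$, whence
\[
G_{\g,N}(x)\ \geq\ -\tfrac12 z_0^2+\tfrac14 z_0^4+\tfrac12\sum_{k=1}^{N-1}\l_{k,N}|z_k|^2,
\]
which is separable and manifestly uniform in $N$. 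With this bound one controls the integral over the complement of the tube $\wt C_\d=\{|z_k|\leq \d r_{k,N}/\sqrt{\l_{k,N}},\,k\geq1\}$ exactly as in Lemma~\ref{concentration}, and the integral over $\{|z_0\pm1|>\d\}$ via the one-dimensional double-well profile $-\tfrac12 z_0^2+\tfrac14 z_0^4$. Near $I_+$ one again uses $C_\d(I_+)\subset z(B_+^N)$, so $h^*\equiv 0$ there. No probabilistic input on $h^*$ is needed anywhere; the proof is purely a matter of uniform Laplace asymptotics once this Cauchy--Schwarz lower bound is in hand.
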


\begin{proof}

The predominant
contribution to the integral comes from
the minimum $I_-$, since around $I_+$ the harmonic function 
$h^*_{B^N_-,B^N_+}(x)$ vanishes.

The proof will go in two steps. We define the tube in the $z_0$-direction, 
\be\Eq(tube.1)
\wt C_\d\equiv \left\{z: \forall_{k\geq 1} |z_k|\leq \d r_{k,N}/\sqrt{\l_{k,N}}
\right\},
\ee
and show that the mass of the complement of this tube is negligible. 
In a second step we show that within that tube, only the neighborhood of
$I_-$ gives a relevant and indeed the desired contribution. The reason for 
splitting our estimates up in this way is that we have to use different ways 
to control the non-quadratic terms.

\begin{lemma}\TH(tube.2)
Let $r_{k,N}$ be chosen as before and let $\d=\sqrt{K\e|\ln \e|}$. Then there
 exists a finite numerical constant, $A_7$, such that for all $N$,
\be\label{numerator.1bis}
\frac{1}{N^{N/2}}\int_{{\wt C_\d}^c}e^{- G_{\g,N}(x)/\e}dx
\leq A_7\sqrt{2\pi\e}^{N}
\frac{e^{\frac1{4\e}}}{\sqrt{\det(\nabla F_{\g,N}(O))}}
\e^{K}.
\ee
The same estimate holds for the integral over the complement of the  set 
\be\Eq(last.2)
D_\d\equiv\left\{x:|z_0-1|\leq \d \lor |z_0+1|\leq \d\right\}.
\ee
\end{lemma}

\begin{proof} 
Recall that $z_0=\frac 1N\sum_{i=0}^{N-1}x_i$.  Then we can write
\be\label{numerator.2}
G_{\g,N}(x)
= -\frac 12z_0^2+\frac 14z_0^4
  +\frac 1{2N}(x,\D x) -\frac 1{2N}\|x\|_2^2 +\frac 12 z_0^2
+\frac 1{4N} \|x\|^4_4 -\frac 14 z_0^4.
\ee
Notice first that by applying the Cauchy-Schwartz inequality, it follows that
\be\Eq(tube.5)
 z_0^4 =N^{-4} \left(\sum_{i=0}^{N-1}x_i\right)^4
\leq N^{-2} \left(\sum_{i=0}^{N-1} x_i^2\right)^2
\leq N^{-1}\sum_{i=0}^{N-1}x_i^4.
\ee
Moreover, $N^{-1}\|x\|_2^2= \|z\|_2^2$, so that expressed in the variables $z$,
\bea\Eq(tube.6)
G_{\g,N}(x)
&\geq& -\frac 12z_0^2+\frac 14z_0^4 +\frac 1{2N}(x,\D x)
-\frac 12\sum_{k=1}^{N-1} z_k^2         \\\nonumber
&=& -\frac 12z_0^2+\frac 14z_0^4
+\frac 12\sum_{k=1}^{N-1} \l_{k,N}z_k^2.
\eea
Therefore, as in the estimate \eqv(concentration.5),
\bea\Eq(tube.7)
\nonumber
\int_{\wt C_\d^c}e^{-\wt G_{\g,N}(z)/\e}dz
&\leq&
\int_{\R}e^{-\e^{-1}(z_0^4/4-z_0^2/2)}dz_0
\sum_{k=1}^{N-1}\int_{|z_k|\geq  \d r_{k,N}/\sqrt{\l_{k,N}}}
e^{-\l_{k,N} |z_k|^2/2\e}dz_k
\\&&\quad\quad\times
\prod_{1\leq i\neq k\leq N-1}
\int_{\R}e^{-\l_{i,N} |z_i|^2/2\e}dz_i
\\\nonumber
&\leq& \int_{\R}e^{-\e^{-1}(z_0^4/4-z_0^2/2)}dz_0    \d^{-1}  \sqrt{\e}
 \sqrt{\prod_{i=1}^{N-1}2\pi\e \l_{i,N}^{-1}}
\sum_{k=1}^{N-1} r_{k,N}^{-1} e^{-\d^2r_{k,N}^2/2\e}
\\\nonumber
&\leq& \int_{\R}e^{-\e^{-1}(z_0^4/4-z_0^2/2)} \sqrt{\prod_{i=1}^{N-1}2\pi\e \l_{i,N}^{-1}}
C \e^K.
\eea
Since clearly, 
\be\Eq(tube.8)
 \int_{\R}e^{-\e^{-1}(z_0^4/4-z_0^2/2)}dz_0 = 2\sqrt{\pi \e} e^{1/4\e}(1+\OO(\e)),
\ee
this proves the first assertion of the  lemma. 

Quite clearly, the same bounds will show that the contribution from the set 
where $|z_0\pm 1|\geq \d$ are negligible, by just considering now the fact that
the range of the integral over $z_0$ is bounded away from the minima in the 
exponent. 
\end{proof}

Finally, we want to compute the remaining part of the integral in 
\eqv(numerator.1), i.e. the integral over
$\wt C_\d\cap \{x:|z_0+1|\leq \d\}$. 
Since the eigenvalues of the Hessian at $I_-$, $\nu_{k,N}$, 
are comparable to the eigenvalues  $\l_{k,N}$ for $k\geq 1$
in the sense that there is a
finite positive  constant, $c^2_\mu$, 
depending only on $\mu$, such that 
$\l_{k,N}\leq\nu_{k,N}\leq c^2_\mu\l_{k_N}$, and since $\nu_{0,N}=2$,
this set is contained in $C_{c_\mu\d}$, where
\be\label{numerator.4}
C_{\d}(I_-)\equiv \left\{z\in \wh \R^N: |z_0+1|\leq \frac{\d}{\sqrt {\nu_0}},\,
|z_k|\leq \d \frac{r_{k,N}}{\sqrt {\n_{k,N}}}\:1\leq k\leq N-1\right\}.
\ee

It is easy to verify that on 
$C_{\d}(I_-)$,
there exists a constant, $A_8$, s.t.
\be\label{numerator.5}
\|z-z(I_-)\|_2^2\leq\d^2\sum_{k=0}^{N-1}\frac{r_{k,N}^2}{\n_{k,N}}\leq\d^2A_8K_2^2.
\ee
and so,  for $\d=\sqrt{K\e|\ln \e|}$,
$C_{\d}(I_-)\subset z(B_-)$.

On $C_\d(I_-)$ we have the following quadratic approximation.
 
\begin{lemma}\label{n-approx}
For all $N$,
\be\label{n-approx.1}
 \wt G_{\g,N}(z)+\frac 14-\frac 1{2} \sum_{k=0}^{N-1}\n_{k,N} |z_k|^2
= R(z)
\ee
and there exists a constant $A_{9}$ and $\d_0$ such
that, for $\d<\d_0$, on $C_{\d}(I_-)$
\be\label{n-approx.2}
|R(z)|\leq A_{9}\d^3
\ee
where the constants  $(\r_k)$  are chosen as before such
that $K_{4/3}$ is finite.
\end{lemma}

\begin{proof} The proof goes in exactly the same way as in the previous cases 
and is left to the reader.
\end{proof}

With this estimate it is now obvious that 
\bea\Eq(last.1)
\int_{C_\d(I_-)}\tilde h^*_{B_-^N,B_+^N}(z)e^{-\wt G_{\g,N}(z)/\e}dz
&=&
\int_{C_\d(I_-)}e^{- \wt G_{\g,N}(z)/\e}dz
\\\nonumber 
&=&
e^{1/4\e} \frac{\sqrt{2\pi\e}^N}
{\sqrt{\det \nabla^2 F_{\g,N}(I_-)}}\left(1+ O(\d^3/\e)\right),
\eea
Using that $\tilde h^*_{B_-^N,B_+^N}(z)$ vanishes on $B_+^N$ and hence on $C_\d(I_+)$,
this estimate together with Lemma \thv(tube.2) proves the proposition.
\end{proof}

\subsection{Proof of Theorem \thv(main)}


\begin{proof}
The proof of Theorem \thv(main) is now an  obvious consequence of
\eqref{key.4} together with   Propositions \ref{capacity} and \ref{numerator}.
\end{proof}


\begin{thebibliography}{99}

\bibitem{barret07}  Barret, F.:
{\it Metastability: Application to a model of sharp
asymptotics for capacities and exit/hitting times}, Master thesis, ENS Cachan,
2007.

\bibitem{berglund107} Berglund, N.,  Fernandez, B.,  Gentz, B.:
{\it Metastability in Interacting Nonlinear Stochastic Differential Equations {I}:
>From Weak Coupling to Synchronization},
Nonlinearity,  20(11), 2551-2581, 2007.

\bibitem{berglund207} Berglund, N.,  Fernandez, B.,  Gentz, B.:
{\it Metastability in Interacting Nonlinear Stochastic
Differential Equations {II}: Large-{N} Behavior}, Nonlinearity,
20(11), 2583-2614, 2007.

\bibitem{bovier09}  Bovier, A.: {\it Metastability}, in Methods of
Contemporary Statistical Mechanics,
           (R. Koteck\'y, ed.), p.177-221,
            Lecture Notes in Mathematics 1970,
            Springer, Berlin, 2009.

\bibitem{bovier04}   Bovier, A.,  Eckhoff, M.,  Gayrard, V.,
Klein,M.: {\it Metastability in reversible diffusion processes
{I}. Sharp asymptotics for capacities and exit times}, Journal of
the European Mathematical Society, 6(2), 399-424, 2004.


\bibitem{BBI08}   Bianchi, B.,  Bovier, A.,   Ioffe, I.:
{\it Sharp asymptotics
            for metastability in the Random Field Curie-Weiss
            model}, Electr. J. Probab. 14, 1541--1603 (2008).
\bibitem{stella} Brassesco, S.:
Some results on small random perturbations of an infinite-dimensional dynamical system.
Stochastic Process. Appl. 38 , 33--53 (1991). 

\bibitem{chungwalsh} Chung, K.L.,  Walsh, J.B.:
{\it Markov processes, Brownian motion, and time symmetry. Second
edition}, Springer, 2005.

\bibitem{freidlinwentzell}Freidlin, M.I.,   Wentzell, A.D.
{\it Random Perturbations of Dynamical Systems}, Springer, 1984.

\bibitem{fajona} Faris, W.G., Jona-Lasinio, G.: 
Large fluctuations for a nonlinear heat equation with noise.  J. Phys. A  15,
3025--3055 (1982). 

\bibitem{fukushima} Fukushima, M.,   Mashima, Y.,  Takeda, M.:
{\it  Dirichlet forms and symmetric Markov processes},
de Gruyter Studies in Mathematics, 19. Walter de Gruyter \& Co., Berlin, 1994.

\bibitem{gt} Gilbarg D.,  Trudinger, N.S.:
{\it Elliptic partial differential equations of second order},
Springer, 2001.

\bibitem{maierstein}  Maier, R.,   Stein, D.:  Droplet nucleation and
domain wall motion in a bounded interval,
Phys. Rev. Lett. 87, 270601-1--270601-4 (2001).

\bibitem{martin}  
Martinelli, F., Olivieri, E., Scoppola.: Small random perturbations of 
finite- and infinite-dimensional dynamical systems: unpredictability of 
exit times.  J. Statist. Phys.  55, 477--504  (1989).


\bibitem{OlivieriVares}  Olivieri E.,  Vares, M.E.:
{\it Large deviations and metastability},
Encyclopedia of Mathematics and its Applications,
Cambridge University Press, 2005.

\bibitem{Simon-Reed}  Reed M.,  Simon, B.:
{\it Methods of modern mathematical physics:
{I} Functional Analysis. Second edition}, Academic Press, 1980.


\bibitem{west}  Vanden-Eijnden, E.,  and Westdickenberg, M.G.:
{\it Rare events in stochastic partial
differential equations on large spatial domains},  J. Stat. Phys.
131,  1023--1038 (2008).








\end{thebibliography}
\end{document}